\theoremstyle{plain}
\newtheorem{theorem}{Theorem}[section]
\newtheorem{proposition}{Proposition}[section]
\newtheorem{lemma}{Lemma}[section]
\theoremstyle{definition}
\newtheorem{definition}{Definition}[section]
\theoremstyle{remark}
\newtheorem{remark}{Remark}[section]
\theoremstyle{example}
\newtheorem*{example}{Example}
\author{Francesca Collet\\
{\small Dipartimento di Matematica} \\
{\small Alma Mater Studiorum Universit\`a di Bologna}\\
{\small Piazza di Porta San Donato 5; 40126 - Bologna, Italy} \\
{\small e-mail: francesca.collet@unibo.it}\\[0.5cm]
Fabrizio Leisen\\
{\small Departamento de Estad\'istica} \\
{\small Universidad Carlos III de Madrid}\\
{\small Calle Madrid 126; 28903 - Getafe (Madrid), Spain} \\
{\small e-mail: fabrizio.leisen@gmail.com}\\[0.5cm]
Fabio Spizzichino\thanks{Corresponding author.}\\
{\small Dipartimento di Matematica} \\
{\small Sapienza Universit\`a di Roma}\\
{\small Piazzale Aldo Moro 5; 00185 - Roma, Italy } \\
{\small e-mail: fabio.spizzichino@uniroma1.it}\\[0.5cm]
Florentina Suter\\
{\small Facultatea de Matematica si Informatica} \\
{\small Universitatea din Bucuresti}\\
{\small Str. Academiei, 14; 010014 - Bucuresti, Romania}\\
{\small e-mail: florentina.suter@fmi.unibuc.ro}}
\title{EXCHANGEABLE OCCUPANCY MODELS AND DISCRETE PROCESSES WITH THE GENERALIZED UNIFORM ORDER STATISTICS PROPERTY}
\date{\today}
\begin{document}

\maketitle

\begin{abstract}
This work focuses on Exchangeable Occupancy Models (EOM) and their relations with the Uniform Order Statistics Property (UOSP) for point processes in discrete time. 
As our main purpose, we show how definitions and results presented in \citet{ShSpSu04} 
can be unified and generalized in the frame of occupancy models.
We first show some general facts about EOM's. Then we introduce a class of EOM's, called $\mathcal{M}^{(a)}$-models, and a concept of generalized Uniform Order Statistics Property in discrete time. For processes with this property, we prove a general characterization result in terms of $\mathcal{M}^{(a)}$-models. Our interest is also focused on properties of closure w.r.t. some natural transformations of EOM's.\\

\vspace{0.3cm}

\noindent \textbf{Keywords:} $\mathcal{M}_{n,r}^{(a)}$-Models, Random Sampling of Arrivals, Transformations of Occupancy Models,  Closure Properties

\vspace{0.3cm}

\noindent \textbf{AMS 2010 Classification:} 62G30, 60G09, 60G55
\end{abstract}

\doublespacing

\section{Introduction}

The so called {\it occupancy distributions} give rise, as well known, to a class of multivariate models useful in the description of randomized phenomena. The name ``occupancy'' comes from the interpretation in terms of particles that are randomly distributed among several cells. In particular, three classical examples, related to as many well-known physical systems, belong to this class: Maxwell-Boltzmann, Bose-Einstein and Fermi-Dirac model, see \citet{Fel68}. In the Maxwell-Boltzmann (MB) statistics the capacity of each cell is unlimited, and the particles are distinguishable. In the Bose-Einstein (BE) statistics, the capacity of each cell is unlimited but the particles are indistinguishable. In the Fermi Dirac (FD) statistics, the particles are indistinguishable, but cells can only hold a maximum of one particle. All these three statistics assume that the cells are distinguishable.
These models are attractive for many reasons. First of all they have wide range of applications in Sciences, Engineering and also in Statistics, as pointed out by \citeauthor{Cha05} \citep[Chapters 4 and 5]{Cha05} and \citet{GaDr2011}. Moreover \citet{Mahmoud} provided an interpretation of occupancy distributions in terms of P\'olya Urns, which are very flexible and applicable to problems arising in various areas; e.g. Clinical Trials (see \citet{CrLei2008} for some references), Economics (see \citet{Aruka11}) and Computer Science (see \citet{ShKoCh11}). From a probabilistic and combinatoric point of view, the three fundamental models (MB, BE and FD) have many interesting properties. Indeed they are, in particular, \emph{exchangeable} and this is a basic remark for our aims. This paper is in fact concentrated on the theme of Exchangeable Occupancy Models (EOM) and their relations with the Uniform Order Statistics Property (UOSP) of counting processes in discrete time. As one main purpose of ours, we show that some notions and results given in \citet*{ShSpSu04,ShSpSu08} admit completely natural generalizations in the frame of EOM's.

 
After appropriate preliminaries, we will consider some general properties of the EOM's. For our purposes, we then introduce the notion of $\mathcal{M}^{(a)}$-models, a relevant sub-class of EOM's that turns out to have an important role in our derivations. In the final part of the paper, we will introduce and analyze discrete-time generalized UOSP. Such a property, will be defined by imposing the form of an  
$\mathcal{M}^{(a)}$-model to the joint distribution of the process jump amounts, conditionally on a fixed number of arrivals up to any given time $t$. In particular, for these processes, several characterizations are proved. \\
More in detail, the outline of the paper is as follows. In Section~\ref{Sect:Review}, we will start with recalling some basic notions and fixing some necessary notation. This will allow us to better explain the motivations of our 
work. Our central results will be presented in the Section~\ref{Sect:M-models}, where we define the class of occupancy $\mathcal{M}^{(a)}$-models and we show natural extensions of the results proved in \citet{ShSpSu04,ShSpSu08}. Section~\ref{Sect:EOMandUOSP} is devoted to some preliminary arguments about occupancy models and Section \ref{Sect:EOM} presents some specific aspects of the EOM's. In Section~\ref{Sect:Transformations} some additional properties of occupancy models are given. In particular, we analyze closure properties under remarkable types of transformations of occupancy models.

\section{Brief review and motivations}\label{Sect:Review}

Our work arises naturally as an attempt to generalize the definitions of UOSP presented in \citet{ShSpSu04,ShSpSu08}. For sake of readability and to make the setting clear, first we briefly summarize main facts of interest therein contained. Secondly, we explain the role played by our paper in this context and the connection with exchangeable occupancy models. \\

We start with some notation. We need to consider two distinct kinds of discrete-time, discrete-space counting processes: with unit and multiple jumps respectively.

\begin{enumerate}
\item Let $\{M_t\}_{t = 0,1,\dots}$ be a discrete-time counting process with \emph{unit jumps}, namely $M_{t+1} - M_t \leq 1$. Assume that $M_0=0$ and $P \{ \lim_{t \to +\infty} M_t = +\infty \} = 1$. \\
Moreover, let $T_1, T_2, \dots$ denote the arrival times of the process; in other words, 
\[
T_k = t \Longleftrightarrow M_{t-1}=k-1 \mbox{ and }  M_t=k \quad (1 \leq k \leq t) \,.
\] 

\item Let $\{N_t\}_{t = 0,1,\dots}$ be a discrete-time counting process with \emph{jump amounts} $\{ J_k\}_{k = 0,1,\dots}$. Assume that $P \{ \lim_{t \to +\infty} N_t = +\infty \} = 1$. \\
Moreover, let $T_1, T_2, \dots$ denote the arrival times of the process; in other words, 
 \[
T_k = \inf \{t \geq 0 : N_t \geq k\} \quad (1 \leq k \leq t) \,.
\] 
\end{enumerate}

Inspired by the UOSP for continuous-time processes, \citet{HuSh94} introduced a corresponding property for discrete-time counting processes with unit step jumps. 

\begin{definition}
$\{M_t\}_{t = 0,1,\dots}$ satisfies the UOSP$(<)$ if, for any $0 < t_1 < t_2 < \cdots < t_k \leq t$, we have
\[
P \{ T_1=t_1, T_2=t_2, \dots, T_k=t_k \vert M_t=k \} = \binom{t}{k}^{-1} \,.
\]
\end{definition}

Successively, in \citet{ShSpSu04,ShSpSu08} an extension of UOSP to processes with multiple jumps is given.

\begin{definition}
$\{N_t\}_{t = 0,1,\dots}$ satisfies the
\begin{itemize}
\item UOSP$(\leq_1)$
if, for any $0 \leq t_1 \leq t_2 \leq \cdots \leq t_k \leq t$, we have
\[
P \{ T_1=t_1, T_2=t_2, \dots, T_k=t_k \vert N_t=k \} = \frac{k!}{j_0! j_1! \cdots j_t!} \left( \frac{1}{t+1} \right)^k \,,
\]
where, for $\ell \in \{ 0,1,\dots,t\}$, $j_\ell$ is the number of values in $\{t_1,t_2,\dots,t_k\}$ that are equal to $\ell$.
\item UOSP$(\leq_2)$
if, for any $0 \leq t_1 \leq t_2 \leq \cdots \leq t_k \leq t$, we have
\[
P \{ T_1=t_1, T_2=t_2, \dots, T_k=t_k \vert N_t=k \} = \binom{t+k}{k}^{-1} \,.
\]
\end{itemize}
\end{definition}

The UOSP's give important information about the possibility for a counting process of being a mixed geometric or a mixed uniform sample process. Recall that a counting process $\{A_t\}_{t =0,1,\dots}$ is called mixed geometric if there exists a random variable $\Theta \in (0,1)$ such that, given $\Theta=\theta$, the inter-arrival times are i.i.d. geometric with parameter $\theta$. Whereas $\{A_t\}_{t=0,1,\dots,\tau}$, with $\tau < \infty$, is a mixed uniform sample process if there exists a positive integer-valued random variable $\Theta$ such that, for $t=0,1,\dots,\tau$,
\[
A_t = \sum_{k=0}^\Theta \mathbf{1}_{\{U_k \leq t\}} \text{ in distribution} \,,
\] 
where $U_1, \dots, U_\Theta$ are independent uniform random variables on $\{0,1,\dots,\tau\}$, independent on $\Theta$. \\
We are now ready to state the following characterization result.

\begin{theorem}\label{Thm:UOSP's}
\begin{enumerate}
\item (\citet{HuSh94}) $\{M_t\}_{t = 0,1,\dots}$ satisfies the UOSP$(<)$ if and only if it is a mixed geometric process.
\item (\citet{ShSpSu04}) $\{N_t\}_{t = 0,1,\dots}$ satisfies the UOSP$(\leq_1)$ if and only if $\{N_t\}_{t=0,1,\dots,\tau}$ is a mixed uniform sample process for every $\tau \geq 0$ such that $P \{ N_\tau < N_s \mbox{ for some } s > \tau \}$.
\end{enumerate}
\end{theorem}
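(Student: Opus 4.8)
The plan is to treat the two parts by one common two-step scheme: first rewrite each UOSP as a statement about the conditional law of the jump structure given the value of the process at time $t$, and then recover the claimed mixture representation from that conditional law. In both parts the ``if'' direction will be a direct computation, whereas the ``only if'' direction requires extracting a latent mixing variable, which is where the real work lies.

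For part (1), I would encode the process through its jump indicators $\varepsilon_i := M_i - M_{i-1} \in \{0,1\}$, so that the event $\{T_1 = t_1, \dots, T_k = t_k, M_t = k\}$ is exactly the binary word of length $t$ having ones at positions $t_1, \dots, t_k$. The point is that UOSP$(<)$ says precisely that, conditionally on $\sum_{i=1}^{t} \varepsilon_i = k$, all $\binom{t}{k}$ such words are equally likely; since $P\{M_t = k\}$ depends only on $(t,k)$, this is equivalent to $P\{\varepsilon_1 = e_1, \dots, \varepsilon_t = e_t\}$ depending only on $t$ and $\sum_i e_i$, i.e.\ to finite exchangeability of $(\varepsilon_1, \dots, \varepsilon_t)$ for every $t$, hence to infinite exchangeability of $(\varepsilon_i)_{i \ge 1}$. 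De Finetti's theorem then furnishes a random variable $\Theta$ with values in $[0,1]$ such that, given $\Theta = \theta$, the $\varepsilon_i$ are i.i.d.\ Bernoulli$(\theta)$, equivalently the inter-arrival times are i.i.d.\ geometric$(\theta)$; the assumption $P\{M_t \to \infty\} = 1$ rules out the atom at $\theta = 0$ (on which $M_t \equiv 0$), placing $\Theta$ in $(0,1)$ up to the trivial boundary value $\theta = 1$, and giving a mixed geometric process. The converse is the direct check that, given $\Theta = \theta$, one has $P\{T_1 = t_1, \dots, T_k = t_k, M_t = k \mid \Theta = \theta\} = \theta^k (1-\theta)^{t-k}$ independently of the configuration, so that the conditional probability given $M_t = k$ is $\binom{t}{k}^{-1}$ for every $\theta$, and mixing preserves this value.

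For part (2), I would similarly read UOSP$(\leq_1)$ through the jump amounts $J_\ell$ (with $J_0 = N_0$ and $J_\ell = N_\ell - N_{\ell-1}$), noting that $j_\ell$ is exactly the realized value of $J_\ell$ and that an ordered arrival vector $t_1 \le \cdots \le t_k$ corresponds bijectively to a vector $(j_0, \dots, j_t)$ with $\sum_\ell j_\ell = k$. Thus UOSP$(\leq_1)$ is equivalent to the statement that, conditionally on $N_t = k$, the jump amounts $(J_0, \dots, J_t)$ follow the Maxwell--Boltzmann (multinomial) occupancy law with $k$ balls in $t+1$ equiprobable cells. For the ``only if'' direction I would fix $\tau$, set $\Theta := N_\tau$, and observe that given $N_\tau = k$ this multinomial law is exactly the law of the occupancy counts of $k$ points drawn i.i.d.\ uniformly on $\{0, \dots, \tau\}$; since $\{N_t\}_{t \le \tau}$ is the partial-sum functional of these jump amounts, its conditional law matches that of $\sum_i \mathbf{1}_{\{U_i \le t\}}$, and mixing over $\Theta = N_\tau$ yields the mixed uniform sample representation. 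The ``if'' direction specializes the representation to $\tau = t$: there all points lie in $\{0, \dots, t\}$, so $N_t = \Theta$ deterministically, conditioning on $N_t = k$ is the same as conditioning on $\Theta = k$, and the induced multinomial law of the jump amounts is precisely UOSP$(\leq_1)$.

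I expect the main obstacle to be the ``only if'' direction of part (1): identifying UOSP$(<)$ with exchangeability requires care in checking the consistency of the finite-exchangeable laws across $t$, so that de Finetti applies to the infinite sequence, and in handling the boundary of the parameter interval, where the divergence hypothesis $M_t \to \infty$ is needed to discard the atom at $\theta = 0$. A secondary point is the side condition $P\{N_\tau < N_s \text{ for some } s > \tau\}$ appearing in part (2): under the standing assumption $N_t \to \infty$ a.s.\ this event has probability one for every finite $\tau$, so it does not in fact restrict the admissible $\tau$, but I would verify that it is exactly what guarantees the conditioning events $\{N_\tau = k\}$ are non-degenerate and that the representation remains consistent as $\tau$ varies.
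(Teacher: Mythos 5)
Your argument is essentially correct, but be aware that the paper itself gives no proof of this theorem: it appears in the review Section~2 and is attributed to \citet{HuSh94} (part~(i)) and \citet{ShSpSu04} (part~(ii)), so there is no internal proof to compare against. The closest thing in the paper is its own generalization, Theorem~\ref{THM}, and there the route is genuinely different from yours: rather than extracting a latent mixing variable via de Finetti or an explicit i.i.d.\ uniform sample, the authors work purely algebraically with the identity $P\{J_0=j_0,\dots,J_t=j_t\}=R_t\bigl(\sum_h j_h\bigr)\prod_h a(j_h)$ and pass back and forth to the conditional $\mathcal{M}^{(a)}$-form by multiplying or dividing by $P\{N_t=k\}$; no mixture representation is ever produced, which is why their equivalence holds uniformly for all the $\mathcal{M}^{(a)}$-models while the uniform-sample picture is specific to the Maxwell--Boltzmann case. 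Your reading of UOSP$(\leq_1)$ as ``conditionally on $N_t=k$, the jump amounts follow the Maxwell--Boltzmann occupancy law on $A_{t+1,k}$'' is exactly the viewpoint the paper later adopts to define the $\mathcal{M}^{(a)}$-UOSP, so your proof is the natural one for the quoted special cases. Two points worth tightening: in part~(1), de Finetti yields $\Theta\in[0,1]$, and while $P\{M_t\to\infty\}=1$ removes the atom at $0$, an atom at $1$ is not excluded by UOSP$(<)$ alone (on that component $M_t=t$ a.s.\ and the events $\{M_t=k\}$ with $k<t$ are null), so the clean conclusion $\Theta\in(0,1)$ needs a convention or a mild nondegeneracy assumption; in part~(2), the ``in distribution'' in the definition of a mixed uniform sample process must be read as equality of all finite-dimensional distributions of $\{N_t\}_{t\le\tau}$, which your construction (an independent copy $\Theta$ with the law of $N_\tau$ together with independent uniforms) does deliver, since the conditional law of $(J_0,\dots,J_\tau)$ given $N_\tau=k$ determines the whole restricted path.
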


The result concerning the process with multiple jumps has basically two limitations compared to the one for the process with unit jumps: the validity of the equivalence is restricted to a finite set of times and the counting process must be bounded. Besides, the characterization of UOSP$(\leq_2)$, that has been presented in \citet{ShSpSu04,ShSpSu08}, has a different format from those given in Theorem
\ref{Thm:UOSP's}. Namely, it has been shown that a process has the UOSP$(\leq_2)$ if and only if the epoch times $T_1, \dots, T_k$ have a joint $\ell_\infty^{\leq}$-spherical density.\\ 

To bridge the gap, we suggest a modification of UOS and mixed geometric properties whose consequence is twofold. On one hand, we generalize the UOSP$(<)$ and UOSP$(\leq_{1,2})$ introduced in \citet{HuSh94,ShSpSu04,ShSpSu08}. On the other, we create an extended framework where $\{ N_{t} \}_{t =0,1,\dots}$ obeys a result analogous to the one fulfilled by $\{ M_{t} \}_{t =0,1,\dots}$; in other words, in which having the generalized UOSP is equivalent to being mixed geometric according to an appropriate definition. \\
The key approach to suitably deduce the new definitions of UOSP and ``mixed geometric'' is to read 
the conditional distribution of the arrival times, given a fixed number of arrivals, as an element in the class of the occupancy $\mathcal{M}^{(a)}$-models. It turns out that, in terms of \emph{exchangeable occupancy models}, 
we can create a \emph{unified framework} where processes with unit and multiple jumps satisfy analogous properties.

\section{Occupancy models and order statistics of discrete variables}\label{Sect:EOMandUOSP}

For fixed $n=2,3,\dots$ and $r=1,2,\dots$, let $A_{n,r}$ be the set defined by
\[
A_{n,r} := \left\{ \mathbf{x} \equiv \left(  x_{1},\dots,x_{n} \right) : x_{j} = 0, 1, \dots, r  \mbox{ and } \sum_{j=1}^{n} x_{j}=r \right\} \,.
\]

As it is well-known (see e.g. \citet{Fel68}) the cardinality of $A_{n,r}$ is
\[
\left\vert A_{n,r} \right\vert = \binom{n+r-1}{n-1} \,.
\]
Starting from the classical scheme of $r$ \emph{particles} that are distributed stochastically into $n$ \emph{cells}, we consider the random vector $\mathbf{X} \equiv (X_{1}, X_{2}, \dots, X_{n})$ where, for $j=1, 2, \dots, n$, $X_{j}$ is the $\{ 0, 1, \dots, r \}$-valued random variable that counts the number of particles fallen in $j$-th cell. $X_{1}, X_{2}, \dots, X_{n}$ are called
\emph{occupancy numbers} and the joint distribution of $(X_{1}, X_{2}, \dots, X_{n})$, describing the probabilistic mechanism of assignment of the particles to the cells, is called an \emph{occupancy model}. Since the vector $(X_{1}, X_{2}, \dots, X_{n})$ takes its values in the set $A_{n,r}$, an \emph{occupancy model} is then a probability distribution on $A_{n,r}$. \\

Let now $B_{r,n}$ denote the set
\[
B_{r,n} := \left\{ \mathbf{u} \equiv (u_{1}, u_{2}, \dots, u_{r}) : u_{i} =1, 2, \dots, n \mbox{ and } u_{1} \leq
u_{2}\leq \cdots \leq u_{r} \right\} 
\]
and consider the mapping $\varphi: B_{r,n} \longrightarrow A_{n,r}$ defined as
\[
\varphi (\mathbf{u}) = (\varphi_{1} (\mathbf{u}), \varphi_{2} (\mathbf{u}), \dots, \varphi_{n}(\mathbf{u})) \,,
\]
with
\[
\varphi_{j} (\mathbf{u}) = \sum_{i=1}^{r} \mathbf{1}_{\{u_{i}=j\}}\,, \quad \mbox{ for } j = 1, 2, \dots, n.
\]
This is a one-to-one correspondence and then, for the cardinality of $B_{r,n}$, we have
\[
\left\vert B_{r,n}\right\vert = \left\vert A_{n,r} \right\vert =\binom{n+r-1}{n-1}.
\]
As to $\psi = \varphi^{-1}: A_{n,r} \longrightarrow B_{r,n}$, we can write
\[
\psi (\mathbf{x}) = (\psi_{1} (\mathbf{x}), \psi_{2} (\mathbf{x}), \dots, \psi_{r} (\mathbf{x})) \,,
\]
with
\[
\psi_{i} (\mathbf{x}) = \min \left\{ s \left\vert \sum_{j=1}^{s} x_{j} \geq i \right. \right\} \,, \quad \mbox{ for } i = 1, 2, \dots, r.
\]

We can thus consider the random vector $\mathbf{U} \equiv (U_{1}, \dots, U_{r})$ defined by
\begin{equation}\label{DefinVarU}%
\mathbf{U} = \psi\left( \mathbf{X} \right).
\end{equation}

For $\left( u_{1}, \dots, u_{r} \right) \in B_{r,n}$, one then has
\begin{equation*}\label{DisrtProbUvsX}
P \{U_{1}=u_{1}, \dots, U_{r}=u_{r} \} = P \{ X_{1} = \varphi_{1} (\mathbf{u}), \dots, X_{n} = \varphi_{n} (\mathbf{u}) \}.
\end{equation*}

Let $\mathcal{P} \left( A_{n,r} \right)$ and $\mathcal{P} \left( B_{r,n} \right)$ respectively denote the family of probability
distributions on $A_{n,r}$ and the family of probability distributions on $B_{r,n}$. In view of the above one-to-one correspondence between $A_{n,r}$ and $B_{r,n}$, we can consider the induced (one-to-one) correspondence between $\mathcal{P} \left( A_{n,r} \right)$ and $\mathcal{P} \left( B_{r,n} \right)$. More precisely, we consider the mappings
$\Phi$ and $\Psi = \Phi^{-1}$ defined by
\[
\Psi(Q) (\mathbf{x}) = Q[\psi (\mathbf{x})] \mbox{ and } \Phi(P) (\mathbf{u}) = P[\varphi (\mathbf{u})] \,,
\]
where $P \in \mathcal{P} \left( A_{n,r} \right)$ and $Q \in \mathcal{P} \left( B_{r,n} \right)$.

\begin{remark}
Note that $P$ is the uniform distribution on $A_{n,r}$ (i.e., the Bose-Einstein model, see also Section~\ref{Sect:EOM}) if and only if $Q=\Phi(P)$ is the uniform distribution on $B_{r,n}$.
\end{remark}
\bigskip

Consider now $r$ exchangeable random variables $Y_{1}, Y_{2}, \dots, Y_{r}$, which take values in $\{ 1, 2, \dots, n \}$. To these random variables we can associate a vector of occupancy numbers by introducing the random variables $X_{1}, X_{2}, \dots, X_{n}$ defined by
\[
X_{j} = \sum_{h=1}^{r} \mathbf{1}_{\{Y_{h} = j\}}\,, \quad \mbox{ for } j = 1, 2, \dots, n.
\]
The set of the possible values taken by $\mathbf{Y} \equiv (Y_1, \dots, Y_r)$ is then $D_{r,n}:=\{1,2,\dots,n\}^{r}$. \\
We will also write $\mathbf{X}=\tilde{\varphi}\left(  \mathbf{Y}\right)$ or %
\begin{equation}\label{XInFunzdiY}
X_{j} = \tilde{\varphi}_{j} (Y_{1}, Y_{2}, \dots, Y_{r}), \quad \mbox{for } j=1,\dots,n
\end{equation}
where $\tilde{\varphi}:D_{r,n}\longrightarrow A_{n,r}$ with

\begin{equation}\label{tilde_varphi_function}
\tilde{\varphi}_{j} (y_{1}, y_{2}, \dots, y_{r}) = \sum_{h=1}^{r} \mathbf{1}_{ \{y_{h} = j \}}\,, \quad \mbox{for } \mathbf{y}\in D_{r,n} \mbox{ and } j = 1, 2, \dots, n.
\end{equation}

It can be immediately seen that the probability distribution of $\left( Y_{1}, Y_{2}, \dots, Y_{r}\right)$ is uniquely determined by the probability distribution of $\left( X_{1}, X_{2}, \dots, X_{n} \right)$ and vice versa.

In fact, the following relationships hold
\begin{equation}\label{XvsY}
P \{ Y_{1} = y_{1}, Y_{2} = y_{2}, \dots, Y_{r} = y_{r} \} = \frac{P \{ X_{1} = \tilde{\varphi}_{1} (\mathbf{y}), X_{2} = \tilde{\varphi}_{2} (\mathbf{y}), \dots, X_{n} = \tilde{\varphi}_{n} (\mathbf{y}) \}}{\binom{r}{\tilde{\varphi}_{1} (\mathbf{y}) \tilde{\varphi}_{2} (\mathbf{y}) \cdots \tilde{\varphi}_{n} (\mathbf{y})}}
\end{equation}
and
\begin{multline}\label{YvsX}
P \{ X_{1} = x_{1}, X_{2} = x_{2}, \dots, X_{n} = x_{n} \} =\\
= \binom{r}{x_{1} \cdots x_{n}} P \{Y_{1} = \psi_{1} (\mathbf{x}), Y_{2} = \psi_{2} (\mathbf{x}), \dots, Y_{r} = \psi_{r} (\mathbf{x}) \}
\end{multline}

We note that $\tilde{\varphi}:D_{r,n} \longrightarrow A_{n,r}$ is different from the previous transformation $\varphi$, which is defined on $B_{r,n}\subset D_{r,n}$ and is bijective. However, $\tilde{\varphi}(\mathbf{y})=\varphi(\mathbf{y})$ if $\mathbf{y} \in B_{r,n}$ and, for any $\mathbf{y} \in D_{r,n}$, we have 
\[
\tilde{\varphi}(\mathbf{y})=\tilde{\varphi}(y_{(1)},\dots,y_{(r)})=\varphi(y_{(1)},\dots,y_{(r)}),
\]
where $(y_{(1)},\dots,y_{(r)})$ is the vector of the coordinates of $\mathbf{y}$ rearranged in increasing order.\\

We consider now the vector $\mathbf{Y}_{\left( \cdot \right)} \equiv \left( Y_{(1)}, \dots ,Y_{(r)} \right)$ of the order statistics of the exchangeable vector $(Y_{1}, Y_{2}, \dots, Y_{r})$. The set of values taken by $\mathbf{Y}_{\left( \cdot \right)}$ is $B_{r,n}$. To a probability distribution of $(Y_{1}, Y_{2}, \dots, Y_{r})$ it corresponds one and only one probability distribution of $\mathbf{Y}_{\left( \cdot \right)}$ and, for $\mathbf{u} \in B_{r,n}$, we can write

\[
P \{ Y_{1} = u_{1}, \dots, Y_{r} = u_{r} \} = \frac{P \{ Y_{(1)} = u_{1}, \dots, Y_{(r)} = u_{r} \}}{\binom{r}{\varphi_{1} (\mathbf{u}) \cdots \varphi_{n} (\mathbf{u})}}.
\]

Furthermore,
\[
\varphi \left( \mathbf{Y}_{( \cdot )} \right) = \tilde{\varphi} \left( \mathbf{Y}_{( \cdot )} \right) = \tilde{\varphi} \left( \mathbf{Y} \right) = \mathbf{X} \,,
\]
and thus
\[
\mathbf{Y}_{( \cdot )} = \varphi^{-1} \left( \mathbf{X} \right) = \psi \left( \mathbf{X} \right).
\]

We can then summarize the arguments above as follows.

\begin{proposition}\label{Prop:order}
Let $X_{1}, X_{2}, \dots, X_{n}$ be random variables such that the support of their joint distribution is $A_{n,r}$, for some $r \in \mathbb{N}$. Moreover, let $Y_{1}, Y_{2}, \dots, Y_{r}$ be exchangeable, $\{ 1, 2, \dots, n \}$-valued random  variables such that \eqref{XInFunzdiY} holds.\\
Then, the random variables $U_{1}, U_2, \dots, U_{r}$, defined by equation \eqref{DefinVarU}, are the order statistics of the vector $(Y_1, \dots, Y_r)$. 
\end{proposition}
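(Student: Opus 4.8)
The plan is to prove the pointwise (hence almost-sure) identity $\mathbf{U} = \mathbf{Y}_{(\cdot)}$ directly, by composing the transformations already at hand rather than by comparing distributions. I would start from the definition \eqref{DefinVarU}, $\mathbf{U} = \psi(\mathbf{X})$, together with the hypothesis \eqref{XInFunzdiY}, which reads $\mathbf{X} = \tilde{\varphi}(\mathbf{Y})$, so that $\mathbf{U} = \psi\bigl(\tilde{\varphi}(\mathbf{Y})\bigr)$; the whole statement then reduces to identifying the composite $\psi \circ \tilde{\varphi}$ with the sorting map $\mathbf{y} \mapsto (y_{(1)}, \dots, y_{(r)})$. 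For this I would invoke the factorization $\tilde{\varphi}(\mathbf{y}) = \varphi(y_{(1)}, \dots, y_{(r)})$ recorded just above the statement: applied to $\mathbf{Y}$ it gives $\mathbf{X} = \varphi(\mathbf{Y}_{(\cdot)})$, and since $\mathbf{Y}_{(\cdot)} \in B_{r,n}$—the set on which $\varphi$ is bijective with inverse $\psi$—applying $\psi$ to both sides yields $\mathbf{U} = \psi(\mathbf{X}) = \mathbf{Y}_{(\cdot)}$.

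To keep the argument self-contained I would verify the decisive step componentwise from the explicit formula $\psi_i(\mathbf{x}) = \min\{s : \sum_{j=1}^s x_j \geq i\}$. Substituting $X_j = \#\{h : Y_h = j\}$ turns the partial sum into $\sum_{j=1}^s X_j = \#\{h : Y_h \leq s\}$, so that $U_i = \min\{s : \#\{h : Y_h \leq s\} \geq i\}$. The condition $\#\{h : Y_h \leq s\} \geq i$ means that at least $i$ of the coordinates do not exceed $s$, equivalently $Y_{(i)} \leq s$; the least such $s$ is $Y_{(i)}$ itself, whence $U_i = Y_{(i)}$ for every $i = 1, \dots, r$, which is exactly the assertion.

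No deep obstacle arises here: the statement is a bookkeeping consequence of the bijection $\varphi$ and of the fact that $\tilde{\varphi}$ factors through sorting. The only points deserving a word of care are the well-definedness of $\psi(\mathbf{X})$—which is guaranteed because $\mathbf{X} = \tilde{\varphi}(\mathbf{Y})$ automatically satisfies $\sum_j X_j = r$ and so lies in $A_{n,r}$, consistently with the support assumption—and the observation that $\mathbf{Y}_{(\cdot)}$ lands in $B_{r,n}$, so that the branch of $\varphi$ being inverted is the bijective one and $\psi \circ \varphi$ really acts as the identity there. I would also remark that the equality $\mathbf{U} = \mathbf{Y}_{(\cdot)}$ is purely combinatorial and holds outcome by outcome: exchangeability of $\mathbf{Y}$ is not used in its proof, but is the structural feature that makes the passage to order statistics the natural object to record.
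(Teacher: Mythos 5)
Your argument is correct and is essentially the paper's own: the paper likewise derives $\varphi(\mathbf{Y}_{(\cdot)}) = \tilde{\varphi}(\mathbf{Y}_{(\cdot)}) = \tilde{\varphi}(\mathbf{Y}) = \mathbf{X}$ and inverts $\varphi$ on $B_{r,n}$ to conclude $\mathbf{Y}_{(\cdot)} = \psi(\mathbf{X}) = \mathbf{U}$, treating the proposition as a summary of the preceding identities. Your additional componentwise check that $U_i = \min\{s : \#\{h : Y_h \leq s\} \geq i\} = Y_{(i)}$ is a correct, self-contained verification of the same pointwise identity.
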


\section{Exchangeable Occupancy Models}\label{Sect:EOM}

Often relevant occupancy models are such that the random variables $X_{1}, \dots, X_{n}$ are exchangeable. The class of the Exchangeable Occupancy Models (EOM) is actually a wide and interesting one. We then devote this Section to analyze several special aspects related with such a condition. We notice, in particular, that the most well-known occupancy models, i.e. \emph{Maxwell-Boltzmann}, \emph{Bose-Einstein} and \emph{Fermi-Dirac}, are exchangeable. They are defined as follows.

\begin{itemize}
\item Maxwell-Boltzmann:
\[
P \{ X_{1}=x_{1}, X_{2}=x_{2}, \dots, X_{n}=x_{n} \}=\frac{1}{n^{r}} \, \frac{r!}{x_{1}! x_{2}! \cdots x_{n}!},
\]
for $\mathbf{x} \equiv (x_{1}, x_{2}, \dots, x_{n}) \in A_{n,r}$. \\

\item Bose-Einstein:
\[
P \{ X_{1}=x_{1}, X_{2}=x_{2}, \dots, X_{n}=x_{n} \} = \frac{1}{\binom{n+r-1}{n-1}}
\]
for $\mathbf{x} \equiv (x_{1}, x_{2}, \dots, x_{n}) \in A_{n,r}$. \\

\item Fermi-Dirac:
\[
P \{ X_{1}=x_{1}, X_{2}=x_{2}, \dots, X_{n}=x_{n} \} = \frac{1}{\binom{n}{r}}
\]
for $\mathbf{x} \equiv (x_{1}, x_{2}, \dots, x_{n}) \in \widehat{A}_{n,r}$, where $\widehat{A}_{n,r} := \left\{ \mathbf{x} \in A_{n,r} : x_{j} \in \{ 0,1 \} \right\}$. \\
\end{itemize}

As mentioned above, we can easily see that any vector $\left(  X_{1}, \dots, X_{n} \right)$ distributed according to one of these models is an exchangeable random vector. \\

In the next Section we will introduce a wide class of EOM's that contains these fundamental models. For an account about MB, BE, FD see \citet{Fel68} and \citet{Cha05}. In the following Propositions we state some general properties of EOM's.

\begin{proposition}\label{UniMarg}
If $X_{1}, X_{2}, \dots, X_{n}$ are exchangeable, then the univariate marginals of  $Y_{1}$, $Y_{2}, \dots$, $Y_{r}$ are uniform on $\{ 1, 2, \dots,n \}$.
\end{proposition}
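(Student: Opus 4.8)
The plan is to pin down the common univariate marginal of the $Y_h$'s by a single first-moment computation on the occupancy numbers, keeping the two exchangeability properties carefully separated. Since $Y_{1}, \dots, Y_{r}$ are exchangeable (the standing assumption under which they were introduced in this section), they share one and the same marginal law on $\{1,\dots,n\}$; writing $p_{j} := P\{Y_{1}=j\}$, the statement reduces to showing that $p_{j}=1/n$ for every $j$.

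First I would take expectations in the defining relation $X_{j}=\sum_{h=1}^{r}\mathbf{1}_{\{Y_{h}=j\}}$, which is exactly \eqref{XInFunzdiY} together with \eqref{tilde_varphi_function}. By linearity of expectation and the exchangeability of the $Y_{h}$'s, $E[X_{j}]=\sum_{h=1}^{r}P\{Y_{h}=j\}=r\,p_{j}$. Next I would invoke the hypothesis of the proposition, namely that $X_{1},\dots,X_{n}$ are exchangeable: this forces $E[X_{j}]$ to be the same for every $j$, and hence $p_{j}=E[X_{j}]/r$ to be independent of $j$. Finally, since each $Y_{h}$ takes values in $\{1,\dots,n\}$ we have $\sum_{j=1}^{n}p_{j}=1$, so the common value must be $p_{j}=1/n$. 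As a consistency check, $\sum_{j=1}^{n}X_{j}=r$ on $A_{n,r}$ yields $E[X_{j}]=r/n$ directly.

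The only genuine point to watch — what I would flag as the crux rather than a real obstacle — is that two conceptually distinct symmetries are at play and both are needed: exchangeability over the \emph{particle} index $h$ (for the $Y$'s) is what makes the individual marginals coincide and lets us write $E[X_{j}]=r\,p_{j}$, while exchangeability over the \emph{cell} index $j$ (for the $X$'s) is what forces those marginals to be constant across cells. Once this bookkeeping is kept straight, the whole argument is one line of moment computation, with no combinatorial estimate required.

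Alternatively, one could bypass expectations and argue by symmetry straight from \eqref{XvsY}: that identity shows $P\{Y_{1}=y_{1},\dots,Y_{r}=y_{r}\}$ depends on $\mathbf{y}$ only through the occupancy vector $\tilde{\varphi}(\mathbf{y})$ and the (permutation-invariant) multinomial coefficient, so exchangeability of the $X_{j}$'s makes the joint law of $(Y_{1},\dots,Y_{r})$ invariant under relabeling the cells by any permutation $\sigma$ of $\{1,\dots,n\}$; each marginal is then $\sigma$-invariant and therefore uniform. I would present the moment computation as the primary proof, being shorter and more transparent, and mention this symmetry viewpoint only as a remark.
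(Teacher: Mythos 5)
Your proof is correct, and it takes a genuinely different route from the paper's. The paper proves the claim by a direct combinatorial computation: it expands $P\{Y_{1}=a\}$ as a sum over all $(r-1)$-tuples $(y_2,\dots,y_r)\in D_{r-1,n}$ via \eqref{XvsY}, groups the summands according to the values $\alpha=\tilde{\varphi}_a(y_2,\dots,y_r)$ and $\beta=\tilde{\varphi}_b(y_2,\dots,y_r)$, counts the $\binom{r-1}{x_1\cdots x_n}$ equal terms in each group, and then matches the resulting blocks of \eqref{marg_a} and \eqref{marg_b} pairwise using exchangeability of $(X_1,\dots,X_n)$ to conclude $P\{Y_1=a\}=P\{Y_1=b\}$. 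Your first-moment argument reaches the same conclusion in one line: $E[X_j]=r\,p_j$ from \eqref{XInFunzdiY} plus exchangeability of the $Y$'s, $E[X_j]$ constant in $j$ from exchangeability (indeed, mere equidistribution) of the $X$'s, and $\sum_j p_j=1$ (or $\sum_j X_j=r$) to fix the constant at $1/n$. This is strictly more economical and makes visible that only the one-dimensional marginals of the $X_j$'s matter, whereas the paper's proof manipulates the full joint law. Your remark that two distinct symmetries are in play --- over the particle index $h$ and over the cell index $j$ --- is exactly the right bookkeeping, and the paper's own proof also silently uses exchangeability of the $Y$'s when it passes from ``$P\{Y_1=a\}=1/n$'' to ``the marginals of $Y_1,\dots,Y_r$ are uniform.'' Your alternative symmetry argument via \eqref{XvsY} is also sound and in fact proves something slightly stronger (invariance of the whole joint law of $\mathbf{Y}$ under relabeling of the cells), which is closer in spirit to the paper's computation but without the term-counting. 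No gaps.
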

\begin{proof}
We prove the statement by showing that for any $a, b \in \{1, 2, \dots, n \}$, $a \neq b$, we have $P \{ Y_{1} = a \} = P \{ Y_{1} = b \}$. This would mean that $P \{ Y_{1} = a \} = \frac{1}{n}$ and hence the univariate marginal distributions of $(Y_{1}, Y_{2}, \dots, Y_{r})$ are uniform on $\{ 1, 2, \dots, n \}$. \\
Let $(y_2, \dots, y_r)$ be a $(r-1)$-tuple in $D_{r-1,n}$ and consider $a \in \{ 1, 2, \dots, n \}$. Then
\[
P\{Y_{1}=a\}=\sum_{(y_2, \dots, y_r) \in D_{r-1,n}} P \{ Y_{1}=a, Y_{2}=y_{2},  \dots, Y_{r}=y_{r} \}.
\]
By using \eqref{XvsY} we obtain that
\begin{align*}
P \{ Y_{1} = a \} &= \sum_{(y_2, \dots, y_r) \in D_{r-1,n}} \frac{P \{ X_{1} = \tilde{\varphi}_{1} (a, y_2, \dots, y_r), \dots, X_{n} = \tilde{\varphi}_{n} (a, y_2, \dots, y_r) \}}{\binom{r}{\tilde{\varphi}_{1} (a, y_2, \dots, y_r) \tilde{\varphi}_{2} (a, y_2, \dots, y_r) \cdots \tilde{\varphi}_{n} (a, y_2, \dots, y_r)}} \\
&\\
& =\sum_{(y_2, \dots, y_r) \in D_{r-1,n}} \frac{\tilde{\varphi}_{1} (a, y_2, \dots, y_r)! \cdots \tilde{\varphi}_{n} (a, y_2, \dots, y_r)!}{r!} \times \\
& \qquad \qquad \times P \{ X_{1} = \tilde{\varphi}_{1} (a, y_2, \dots, y_r), \dots, X_{n}
= \tilde{\varphi}_{n} (a, y_2, \dots, y_r)\}
\end{align*}
Taking into account that for any $j \in \{ 1, 2, \dots, n \}$, the function $\tilde{\varphi}_{j}$ is given by \eqref{tilde_varphi_function}, we obtain
\begin{align}\label{marg_a}
P \{ Y_{1} = a \} &  = \sum_{(y_2, \dots, y_r) \in D_{r-1,n}} \frac{\tilde{\varphi}_{1} (y_2, \dots, y_r)! \cdots (\tilde{\varphi}_{a} (y_2, \dots, y_r)+1)! \cdots \tilde{\varphi}_{n} (y_2, \dots, y_r)!}{r!} \times \nonumber\\
& \: \times P \{ X_{1} = \tilde{\varphi}_{1} (y_2, \dots, y_r), \dots, X_{a} = \tilde{\varphi}_{a} (y_2, \dots, y_r)+1, \dots, X_{n} = \tilde{\varphi}_{n} (y_2, \dots, y_r) \} \nonumber\\
&\nonumber\\
&  = \sum_{(y_2, \dots, y_r) \in D_{r-1,n}} \frac{\tilde{\varphi}_{1} (y_2, \dots, y_r)! \cdots \tilde{\varphi}_{n} (y_2, \dots, y_r)!}{r!} (\tilde{\varphi}_{a} (y_2, \dots, y_r)+1) \nonumber\\
& \: \times P \{ X_{1} = \tilde{\varphi}_{1} (y_2, \dots, y_r), \dots, X_{a} = \tilde{\varphi}_{a} (y_2, \dots, y_r)+1, \dots, X_{n} = \tilde{\varphi}_{n}(y_2, \dots, y_r) \}.
\end{align}
Now, we take $b \in \{ 1, 2, \dots, n \}$, $b \neq a$. Without loss of generality we can suppose that $a<b$. Then, the univariate marginal of $Y_{1}$ computed in $b$ is
\begin{align}\label{marg_b}
P \{ Y_{1} = b \} & = \sum_{(y_2, \dots, y_r) \in D_{r-1,n}} \frac{\tilde{\varphi}_{1} (y_2, \dots, y_r)! \cdots \tilde{\varphi}_{n} (y_2, \dots, y_r)!}{r!} (\tilde{\varphi}_{b} (y_2, \dots, y_r)+1) \nonumber\\
& \: \times P \{ X_{1} = \tilde{\varphi}_{1} (y_2, \dots, y_r), \dots, X_{b} = \tilde{\varphi}_{b} (y_2, \dots, y_r)+1, \dots, X_{n} = \tilde{\varphi}_{n}(y_2, \dots, y_r) \}. 
\end{align}
The sums \eqref{marg_a} and \eqref{marg_b} have $n^{r-1}$ terms and are computed using all possible $(r-1)$-tuples $(y_2, \dots, y_r) \in D_{r-1,n}$. Therefore for any $j \in \{ 1, 2,  \dots, n \}$ and $i \in \{ 0, 1, 2, \dots, r-1 \}$ there are summation terms in \eqref{marg_a} and in \eqref{marg_b}, such that $\tilde{\varphi}_{j} (y_2, \dots, y_r) = i$. Let $\alpha, \beta \in \{ 0, 1, \dots, r-1 \}$ two fixed values. Then there are summation terms in \eqref{marg_a} such that $\tilde{\varphi}_{a}(y_2, \dots, y_r) = \alpha$ and $\tilde{\varphi}_{b} (y_2, \dots, y_r) = \beta$. Also there are summation terms in \eqref{marg_b} such that $\tilde{\varphi}_{a} (y_2, \dots, y_r) = \beta$ and $\tilde{\varphi}_{b} (y_2, \dots, y_r) = \alpha$. \\
Let us now consider $(x_{1}, \dots, x_{a}, \dots, x_{b}, \dots, x_{n}) \in A_{n,r-1}$ such that $x_{a} = \alpha$ and $x_{b} = \beta$. Then, there are $\binom{r-1}{x_{1} x_{2} \cdots x_{n}}$ summation terms in \eqref{marg_a}, all equal to
\[
\frac{x_{1}! \cdots x_{n}!}{r!} (\alpha+1) P \{ X_{1} = x_{1}, \dots, X_{a}= \alpha+1, \dots, X_{b} = \beta, \dots, X_{n} = x_{n} \}
\]
The overall sum of these terms is:
\begin{equation}\label{sum_a}
\frac{\alpha+1}{r} P \{ X_{1} = x_{1}, \dots, X_{a} = \alpha+1, \dots, X_{b} = \beta, \dots, X_{n} = x_{n} \}.
\end{equation}
In the same way let us consider $(x_{1},\dots, x_{a}, \dots, x_{b}, \dots, x_{n}) \in A_{n,r-1}$ such that $x_{a} = \beta$ and $x_{b} = \alpha$. Then, there are $\binom{r-1}{x_{1} x_{2} \cdots x_{n}}$ summation terms in \eqref{marg_a}, all equal to
\[
\frac{x_{1}! \cdots x_{n}!}{r!} (\alpha+1) P \{ X_{1} = x_{1}, \dots, X_{a} = \beta, \dots, X_{b} = \alpha+1, \dots, X_{n} = x_{n} \}
\]
The overall sum of these terms is:
\begin{equation}\label{sum_b}
\frac{\alpha+1}{r} P \{ X_{1} = x_{1}, \dots, X_{a} = \beta, \dots, X_{b} = \alpha+1, \dots, X_{n} = x_{n} \}.
\end{equation}
The random vector $(X_{1}, X_{2}, \dots, X_{n})$ is exchangeable and therefore \eqref{sum_a} and \eqref{sum_b} are equal. For any $\alpha, \beta \in \{ 0, 1, \dots, r-1 \}$ the above terms uniquely determine the sums \eqref{marg_a} and \eqref{marg_b}, which then are equal. As a consequence, $P \{ Y_{1} = a \} = P \{ Y_{1} = b \}$. \\
\end{proof}

\begin{remark}
Let
\begin{equation*}
\begin{split}
(n)_r&=n(n+1)\cdots (n+r-1)\\
n^{(r)}&=n(n-1)\cdots (n-r+1)
\end{split}
\end{equation*}
be the ascending and falling factorial, respectively (see \citet{Cha05}). For MB, BE and FD, the joint distributions of the random variables $Y_1, \dots, Y_r$  are given by
\begin{itemize}
\item Maxwell-Boltzmann:
\[
P \{Y_1=y_1, \dots, Y_r = y_r\} = \frac{1}{n^r}
\]
for $\mathbf{y} \in D_{r,n}$; \\

\item Bose-Einstein:
\[
P \{Y_1=y_1, \dots, Y_r = y_r\} = \frac{\tilde{\varphi}_1(\mathbf{y})! \cdots \tilde{\varphi}_n(\mathbf{y})!}{(n+r-1)(n+r-2) \cdots (n+1) n} = \frac{\prod_{j=1}^n \tilde{\varphi}_j(\mathbf{y})!}{(n)_r}
\]
for $\mathbf{y} \in D_{r,n}$; \\

\item Fermi-Dirac:
\[
P \{Y_1=y_1, \dots, Y_r = y_r\} = \frac{\tilde{\varphi}_1(\mathbf{y})! \cdots \tilde{\varphi}_n(\mathbf{y})!}{n(n-1)(n-2) \cdots (n-r+1)} = \frac{\prod_{j=1}^n \tilde{\varphi}_j(\mathbf{y})!}{n^{(r)}}
\]
for $\mathbf{y} \in \widehat{D}_{r,n}$, where $\widehat{D}_{r,n} = \left\{\mathbf{y} \in D_{r,n} : \sum_{j=1}^r y_j \leq n \mbox{ and the $y_j$'s are all distinct}  \right\}$.
\end{itemize}
All these three distributions admit uniform marginals, in agreement with Proposition \ref{UniMarg}.
Observe that, in the MB case, the random variables $Y_1, \dots, Y_r$ are independent and their distribution is uniform over the set $\{1, 2, \dots, n\}$. \\
\end{remark}

\section{Occupancy $\mathcal{M}^{(a)}$-models and processes with the generalized UOSP}\label{Sect:M-models}

In this Section we consider a remarkable sub-class of EOM's, that includes the models MB, FD and BE.  Such a class can be introduced as follows: fix a function \mbox{$a: \{ 0, 1, \dots \} \longrightarrow ]0,+\infty[$} and for $r, n \in \mathbb{N}$, set
\begin{equation}\label{Def:EOMM}
P \{ \mathbf{X} = \mathbf{x} \} = \frac{\prod_{j=1}^n a(x_j)}{C_{n,r}^{(a)}}  \qquad \mbox{ for } \mathbf{x} \in A_{n,r} \,,
\end{equation}
where
\[
C_{n,r}^{(a)} = \sum_{\boldsymbol{\xi} \in A_{n,r}} \prod_{j=1}^n a \left( \xi_j \right) \,.
\]
In this case we say that $\mathbf{X} \equiv (X_1, \dots, X_n)$ is distributed according to the exchangeable occupancy model $\mathcal{M}_{n,r}^{(a)}$.\\

Notice that the Maxwell-Boltzmann, Bose-Einstein and Fermi-Dirac models are respectively obtained by letting
\begin{equation*}
\begin{array}{lll}
\mbox{MB: } & a(x) = \dfrac{1}{x!}, &\mbox{ for } x=0, 1, 2, \dots\\
\mbox{BE: } & a(x) = 1, &\mbox{ for }  x = 0, 1, 2, \dots\\
\mbox{FD: } & a(0) = 1, \quad a(1)=1, \quad a(x)=0, & \mbox{ for } x = 2, 3, \dots
\end{array}
\end{equation*}

\bigskip

It is interesting at this stage to point out the existence of $\mathcal{M}_{n,r}^{(a)}$-models different from the three above. An example is provided by the Pseudo-contagious models presented in \citet{Cha05}.

\begin{example}[Pseudo-contagious occupancy model]
This is the model characterized by the joint probability distribution
$$P\{X_1=x_1,\dots,X_n=x_n\} = \frac{\binom{s+x_1-1}{x_1}\cdots \binom{s+x_n-1}{x_n}}{\binom{sn+r-1}{r}} \,, \quad \mbox{ for } {\bf x}\in A_{n,r} \,.$$
This model belongs to the class $\mathcal{M}_{n,r}^{(a)}$. In fact, it corresponds to the choice of $a(x)=\binom{s+x-1}{x}$.
\end{example}

%
%

\medskip 

In a $\mathcal{M}_{n,r}^{(a)}$-model, the joint distribution of the $Y_1,\dots, Y_r$ random variables introduced in Section~\ref{Sect:EOMandUOSP} is
\begin{equation*}\label{jointYsEOMM}
P \{ Y_{1} = y_{1}, Y_{2} = y_{2}, \dots, Y_{r} = y_{r} \} = \frac{\prod _{l=1}^{n} a(\tilde{\varphi}_{l} (\mathbf{y})) \tilde{\varphi}_{l} (\mathbf{y})!}{r!C_{n,r}^{(a)}}.
\end{equation*}

\medskip

An approach for constructing $\mathcal{M}_{n,r}^{(a)}$-models is described in \citeauthor{Cha05} \citep[Chapter 4]{Cha05} and makes use of i.i.d. random variables. It works as follows. Let $Z_1,\dots,Z_n$ be $\mathbb{N}$-valued i.i.d. random variables with common law
$$P\{Z_i=x\}=q(x)\qquad \mbox{ for } i=1,\dots,n.$$
and let $S_n=Z_1+\cdots+Z_n$. \\
Consider the random vector $(X_1,\dots,X_n)$ with values in $A_{n,r}$ and such that
$$P\{X_1=x_1,\dots,X_n=x_n\} := P\{Z_1=x_1,\dots,Z_n=x_n|S_n=r\}.$$

This means that, for any $(x_1,\dots,x_n) \in A_{n,r}$, it holds
\begin{align}\label{qEOM}
P\{X_1=x_1,\dots,X_n=x_n\} &= \frac{P\{Z_1=x_1,\dots,Z_n=x_n\}}{P\{S_n=r\}} 
= \frac{\prod_{j=1}^n q(x_j)}{P\{S_n=r\}} \,. 
\end{align}

The Formula \eqref{Def:EOMM} can be seen as a slight generalization of Formula \eqref{qEOM}; actually $a(x)$ is not necessarily a probability distribution. We can furthermore extend as follows the construction of the approach based on i.i.d. variables. Let $Z_{1}, \dots, Z_{n}$ be $\mathbb{N}$-valued conditionally i.i.d. random variables with joint discrete density
\begin{equation}\label{ExponModel}
P \{ Z_{1} = z_{1}, \dots, Z_{n} = z_{n} \} = \prod_{j=1}^{n} a \left( z_{j} \right) \int_{0}^{+\infty} \exp \left\{ -\theta \sum_{j=1}^{n} z_{j} \right\} \Lambda \left( d\theta \right),
\end{equation}
$\Lambda$ being a probability distribution on the positive real half-line. Put $S_{n} = \sum_{j=1}^{n} Z_{j}$ and let us repeat the same construction as before: consider the random vector $\left( X_{1}, \dots, X_{n} \right)$ with values in $A_{n,r}$ and such that, for $\mathbf{x} \in A_{n,r}$,
\[
P \{ X_{1} = x_{1}, \dots, X_{n} = x_{n} \} := P \{ Z_{1} = x_{1}, \dots, Z_{n} = x_{n} | S_{n} = r \}.
\]

Then, as it is easily seen, we reobtain the occupancy model in \eqref{Def:EOMM}. Indeed, we have
\begin{align*}
P \{ Z_{1} = x_{1}, \dots, Z_{n} = x_{n} | S_{n} = r \} &= \frac{P \{ Z_{1} = x_{1}, \dots, Z_{n} = x_{n} \}}{P \{ S_{n} = r \}} \\
&\\
&= \frac{\prod_{j=1}^{n} a \left( x_{j} \right) \int_{0}^{+\infty} \exp \{ -\theta r \} \Lambda \left( d\theta \right)}{\sum_{\mathbf{v}\in A_{n,r}} \prod_{j=1}^{n} a \left( v_{j} \right) \int_{0}^{+\infty} \exp \{ -\theta r \} \Lambda \left( d\theta \right)} \\
&\\
&=\frac{\prod_{j=1}^{n} a \left( x_{j} \right)}{C_{n,r}^{(a)}}.
\end{align*}

\begin{remark}\label{Rmk:sufficiency}
The fact that $P \{ Z_{1} = x_{1}, \dots, Z_{n} = x_{n} | S_{n} = r \}$ does not depend on the distribution $\Lambda$, has an immediate interpretation in statistical terms: for the \emph{exponential model} in \eqref{ExponModel}, $S_{n}$ is a \emph{sufficient statistic} with respect to the \emph{parameter} $\theta$. 
\end{remark}

The construction considered above is somehow more general than the one based on i.i.d. variables. Anyway, the Formula \eqref{qEOM} suggests some important remarks.

\begin{remark}
Up to a multiplicative factor, the normalization constant $C_{n,r}^{(a)}$ in \eqref{Def:EOMM} can be interpreted as the probability that a sum  of certain i.i.d. random variables is equal to $r$, i.e. $C_{n,r}^{(a)} = k P\{S_n=r\}$ with $k$ suitable constant.
\end{remark}

\begin{remark}
With the above construction the three fundamental models can be recovered by choosing suitably the distribution $q(x)$. In fact, MB, BE and FD correspond to the cases when $q(x)$ obeys a Poisson, Geometric and Bernoulli distribution, respectively. Moreover, the pseudo-contagious occupancy model is obtained by setting $q(x)$ a negative Binomial distribution.\\
\end{remark}

In the remaining part of this Section we want to generalize Theorem \ref{Thm:UOSP's}. More precisely, we 
will define the class of the processes with the generalized UOSP and provide a related characterization  in terms of $\mathcal{M}^{(a)}$-models.\\



For $M \in \mathbb{N} \cup \{+\infty\}$, let $\{ N_{t} \}_{t=0, 1, \dots,M}$ be a discrete-time counting process with \emph{jump amounts} $J_{0}, J_{1}, \dots, J_{M}$, i.e. $J_{0}, J_{1}, \dots, J_{M}$ is a sequence of $\{ 0, 1, 2, \dots \}$-valued random variables such that, for $t = 0, 1, \dots, M$,
\begin{equation}\label{Def:CountProc}
N_{t} = \sum_{h=0}^{t} J_{h}.
\end{equation}

For our purposes we introduce the following notation and definitions.

Let $T_{1}, T_{2}, \dots$ denote the \emph{arrival times} of the process $\{ N_{t} \}_{t = 0, 1, \dots}$, i.e.
\begin{equation}\label{Def:ArrTimes}
T_{n} = \inf \{ t \geq 0 : N_{t} \geq n \},
\end{equation}
and let $Z_{1}, Z_{2}, \dots$ denote the \emph{inter-arrival times}, i.e.
\[
Z_{n} =T_{n} -T_{n-1} \,.
\]
Notice that, since $P \{ J_{h} > 1 \} > 0$, it can happen $\{ T_{n-1} = T_{n} \}$, and then $\{ Z_{n} = 0 \}$, for some $n$. The zeros of the $Z$'s are related to the \emph{ties} of the $T$'s and to the jump amounts greater than one.


%
\begin{definition}
Let $a:\{0,1,\dots\} \longrightarrow ]0,+\infty[$ be a given function and $\{ N_{t} \}_{t = 0, 1, \dots}$ a discrete-time counting process with jump amounts $J_0, \dots, J_t$. We say that it satisfies the $\mathcal{M}^{(a)}$-Uniform Order Statistics Property ($\mathcal{M}^{(a)}$-UOSP) if, for any $t, k \in \mathbb{N}$ and any $(j_{0}, \dots, j_{t}) \in A_{t+1,k}$, we have
\begin{equation}\label{Def:M-UOSP}
P \{ J_{0} = j_{0}, \dots, J_{t} = j_{t} | N_{t} = k \} = \frac{\prod_{h=0}^t a(j_{h})}{C_{t+1,k}^{(a)}}.
\end{equation}
\end{definition}
This definition can be seen as a natural, and unifying extension of the definition of discrete UOSP given in \citet{ShSpSu04,ShSpSu08}. In fact, by choosing function $a$ as in FD, MB and BE models, the  UOSP$(<)$ and UOSP$(\leq_{1,2})$ are respectively recovered.\\
The following definition, in its turn, can be seen as an appropriate generalization of the definition of mixed geometric process  given in \citeauthor{HuSh94} \citep[Section 4]{HuSh94}.

\begin{definition}\label{Def:a-mixgeom}
Let $a: \{ 0, 1, \dots \} \longrightarrow ]0,+\infty[$ be a given function such that $a(0)=1$. The process $\{ N_{t} \}_{t = 0, 1, \dots, M}$ is an $a$\emph{-mixed geometric process} if the discrete joint density of $(J_{0}, J_{1}, \dots, J_{t} )$ has the form
\begin{equation}\label{CondDefMixedaProc}
p_{t} (j_{0}, j_{1}, \dots, j_{t}) = R_{t} \left( \sum_{h=0}^{t} j_{h} \right) \cdot \prod_{h=0}^{t} a(j_{h}) \quad \mbox{for } t = 0, 1, \dots, M
\end{equation}
for a suitable sequence of functions $R_{t}: \{0, 1, \dots \} \longrightarrow \mathbb{R}_{+}$, $t=0,1,\dots,M$.
\end{definition}
Notice that $M$ in the above definition can be a natural number or $+\infty$.
\begin{remark}
The sequence of functions $R_{1}, R_{2}, \dots$ in the Definition \ref{Def:a-mixgeom} cannot be independent of function $a$. In fact, since the discrete density $p_{t-1}$ must be the marginal of $p_{t}$, from \eqref{CondDefMixedaProc} we obtain
\[
R_{t-1}(k) = \sum_{l=0}^{+\infty} a(l) R_{t} (k+l).
\] 
It is easy to see that $R_{t}(k)$ must be related to the coefficients $C_{t,k}^{(a)}$ (in this respect, see also Formula \eqref{R-eq} below).
\end{remark}

\begin{remark}
Similarly to what had been developed in \citet{HuSh94} we notice that an $a$-mixed geometric process is a (non-homogeneous) Markov chain. More precisely, a direct use of the condition \eqref{CondDefMixedaProc} readily yields:
\[
P \{ N_{t+1} = k+i | N_{t} = k \} = a(i) \, \frac{R_{t+1}(k+i)}{R_t(k)} \,.
\]
This property is related with the sufficiency property that was pointed out in Remark~\ref{Rmk:sufficiency}.
\end{remark}


The following characterization theorem extends the result in \citeauthor{ShSpSu04} \citep[Section 4]{ShSpSu04}. It is based on very simple relations that, in the case of $a$-mixed geometric processes, tie the joint probability distributions of the variables $J$'s and those of $Z$'s and $T$'s.

\begin{theorem}\label{THM}
Let $M\in\mathbb{N}$ or $M=+\infty$ and let $\{ N_{t} \}_{t = 0, 1, \dots, M}$ be a discrete-time counting process. Then, the following statements are equivalent:
\begin{enumerate}
\item $\{ N_{t} \}_{t = 0, 1, \dots, M}$ satisfies the $\mathcal{M}^{(a)}$-UOSP.

\item $\{ N_{t} \}_{t = 0, 1, \dots, M}$ is an $a$-mixed geometric process.

\item A sequence of functions $R_n:\mathbb{N}\longrightarrow \mathbb{R}_{+}$ exists such that, for any $k \in \mathbb{N}$, 
\begin{equation}\label{Thm:3}
P\lbrace Z_1=z_1,\dots,Z_k=z_k, Z_{k+1}>0\rbrace=R_{\sum_{i=1}^{k} z_{i}}(k)\cdot \prod_{h=0}^{\sum_{i=1}^{k} z_{i}} a \left( \sum_{i=1}^k \mathbf{1}_{ \left\{ \sum_{d=1}^{i} z_{d}=h \right\} } \right) \,.
\end{equation}

\item A sequence of functions $R_n:\mathbb{N}\longrightarrow \mathbb{R}_{+}$ exists such that, for any $\chi \in \mathbb{N}$, 
\begin{equation}\label{Thm:4}
P \{ T_{1} = t_{1}, \dots, T_{\chi} = t_{\chi}, T_{\chi+1}>t_{\chi} \} = R_{t_{\chi}} (\chi) \cdot \prod_{h=0}^{t_{\chi}} a \left( \sum_{i=1}^{\chi} \mathbf{1}_{ \{ t_{i}=h \}} \right).
\end{equation}
\end{enumerate}
\end{theorem}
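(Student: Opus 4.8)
The plan is to prove the four conditions equivalent by separating the genuinely probabilistic content from pure bookkeeping. The equivalence $(1)\Leftrightarrow(2)$ is a conditioning identity; $(2)\Rightarrow(4)$ and $(4)\Leftrightarrow(3)$ are changes of description obtained from the occupancy dictionary of Section~\ref{Sect:EOMandUOSP} (read the time slots $\{0,\dots,t\}$ as the $n=t+1$ cells and the $k$ arrivals as the particles, so that $(J_0,\dots,J_t)$ is the occupancy vector $\mathbf{X}\in A_{t+1,k}$ and $(T_1,\dots,T_k)$ is $\psi(\mathbf{X})$, as in Proposition~\ref{Prop:order}); and the one substantial step, which I expect to be the main obstacle, is the return arrow $(4)\Rightarrow(2)$.

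For $(1)\Leftrightarrow(2)$ I would use that $\{J_0=j_0,\dots,J_t=j_t\}\subseteq\{N_t=k\}$ whenever $(j_0,\dots,j_t)\in A_{t+1,k}$, so that $p_t(j_0,\dots,j_t)=P\{J_0=j_0,\dots,J_t=j_t\mid N_t=k\}\,P\{N_t=k\}$. Substituting the $\mathcal{M}^{(a)}$-UOSP form \eqref{Def:M-UOSP} puts $p_t$ in the shape \eqref{CondDefMixedaProc} with $R_t(k):=P\{N_t=k\}/C_{t+1,k}^{(a)}$; conversely, summing \eqref{CondDefMixedaProc} over $A_{t+1,k}$ gives $P\{N_t=k\}=R_t(k)\,C_{t+1,k}^{(a)}$, so $R_t(k)$ cancels and \eqref{Def:M-UOSP} reappears. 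Note that this equivalence already delivers the full product form for \emph{every} configuration, trailing zeros included.

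Next, $(2)\Rightarrow(4)$ and $(4)\Leftrightarrow(3)$: the event $\{T_1=t_1,\dots,T_\chi=t_\chi,\,T_{\chi+1}>t_\chi\}$ specifies exactly the jump amounts on $\{0,\dots,t_\chi\}$, namely $j_h=\sum_{i=1}^\chi\mathbf{1}_{\{t_i=h\}}$, the strict inequality $T_{\chi+1}>t_\chi$ being what pins down the last coordinate $J_{t_\chi}$ and forces $N_{t_\chi}=\chi$; hence this probability is literally the marginal $p_{t_\chi}(j_0,\dots,j_{t_\chi})$, and \eqref{Thm:4} is \eqref{CondDefMixedaProc} read through $\tilde\varphi$, with $R_{t_\chi}(\chi)=R_t(\sum_h j_h)$ since $N_{t_\chi}=\chi$. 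Statement $(3)$ is the same assertion after the partial-sum bijection $T_i=\sum_{d=1}^i Z_d$ (with $T_0=0$), under which $\{Z_1=z_1,\dots,Z_k=z_k,Z_{k+1}>0\}$ equals $\{T_1=t_1,\dots,T_k=t_k,T_{k+1}>t_k\}$ and the indicator sums of \eqref{Thm:3} become those of \eqref{Thm:4}; so $(3)$ and $(4)$ carry identical information.

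The crux is $(4)\Rightarrow(2)$. The difficulty is that \eqref{Thm:4} (equivalently \eqref{Thm:3}) only controls the \emph{last-arrival} marginals, i.e.\ configurations closed off at the last occupied slot; it fixes $R_t(k)$ on configurations with $j_t\geq1$ but says nothing directly about a marginal $p_t(j_0,\dots,j_{t-1},0)$ whose final slots are empty. One must show that such trailing-zero marginals carry the \emph{same} coefficient $R_t(k)$, which amounts to the consistency relation $R_{t-1}(k)=\sum_{l\geq0}a(l)R_t(k+l)$ recorded in the Remark after Definition~\ref{Def:a-mixgeom}. I would obtain this by exploiting the standing hypothesis $P\{\lim_{t\to\infty}N_t=+\infty\}=1$: for a last-arrival configuration at time $t$, decomposing over the a.s.\ finite next arrival (at some slot $s>t$, of size $m\geq1$) and applying \eqref{Thm:4} to each resulting last-arrival configuration at time $s$ yields $R_t(k)=\sum_{s>t}\sum_{m\geq1}a(m)R_s(k+m)$; the same decomposition applied to a trailing-zero marginal produces exactly this sum, so the two coefficients coincide and the product form \eqref{CondDefMixedaProc} holds for all configurations, giving $(2)$. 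For finite $M$ the telescoping is truncated and the terminal mass $P\{T_{k+1}>M\}$ must be carried along, but the argument is analogous. The delicate point throughout is keeping the closing events ($T_{\chi+1}>t_\chi$, $Z_{k+1}>0$, and the ``never again'' event at the horizon) straight, since it is precisely they that convert arrival-time statements into honest marginals of the jump process.
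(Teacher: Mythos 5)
Your architecture matches the paper's almost exactly: (i)$\Leftrightarrow$(ii) via the conditioning identity with $R_t(k)=P\{N_t=k\}/C^{(a)}_{t+1,k}$, (iii)$\Leftrightarrow$(iv) via the partial-sum bijection between the $Z$'s and the $T$'s, and (ii)$\Rightarrow$(iv) by reading $\{T_1=t_1,\dots,T_\chi=t_\chi,T_{\chi+1}>t_\chi\}$ as the event $\{J_h=\sum_{i}\mathbf{1}_{\{t_i=h\}},\ h=0,\dots,t_\chi\}$ --- these steps are essentially verbatim the published argument. Where you genuinely depart is (iv)$\Rightarrow$(ii). The paper simply inverts the event identification, writing $\{J_0=j_0,\dots,J_m=j_m\}$ as an event of the form $\{T_1=t_1,\dots,T_\chi=m,T_{\chi+1}>m\}$ and applying \eqref{Thm:4}; this presupposes $j_m\geq 1$, and the trailing-zero case that you correctly isolate as the crux is passed over in silence. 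Your telescoping over the almost-surely finite next arrival, which identifies the coefficient of a trailing-zero marginal through $R_m(k)=\sum_{s>m}\sum_{l\geq 1}a(0)^{s-1-m}a(l)R_s(k+l)$ (you dropped the powers of $a(0)$, harmless once $a(0)=1$ is imposed as in Definition~\ref{Def:a-mixgeom}), is not in the paper and is in fact needed: one can construct a two-step process in which \eqref{Thm:4} holds for every last-arrival configuration while $P\{J_0=1,J_1=0\}$ is chosen freely, so that (ii) fails. Your extra argument therefore buys a complete proof where the paper's is elliptical. Two caveats. First, your argument relies on $P\{\lim_{t}N_t=+\infty\}=1$, which is a standing assumption of Section~\ref{Sect:Review} but is not restated in the theorem; it must be invoked explicitly, since it is exactly what makes the decomposition over the next arrival exhaustive. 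Second, for finite $M$ the situation is worse than ``analogous'': the residual event at the horizon is itself a trailing-zero configuration, so the telescoping does not close, and (iv)$\Rightarrow$(ii) then requires a genuine boundary condition rather than mere bookkeeping of the terminal mass.
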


\begin{proof}
(i) $\Rightarrow$ (ii). Consider $\{N_t\}_{t=0,1 \dots, M}$ satisfying the $\mathcal{M}^{(a)}$-UOSP. Clearly \eqref{CondDefMixedaProc} is satisfied. In fact we can write
\begin{align*}
P \{ J_{0} = j_{0}, \dots, J_{t} = j_{t}\} &= P \left\{ J_{0} = j_{0}, \dots, J_{t-1} = j_{t-1}, J_t=j_t, N_{t} = \sum_{h=0}^{t} j_{h} \right\} \\ 
&= P \left\{ J_{0} = j_{0}, \dots, J_{t} = j_{t} \left\vert N_{t} = \sum_{h=0}^{t} j_{h} \right. \right\} \, P \left\{ N_{t} = \sum_{h=0}^{t} j_{h} \right\} \\
&= \frac{\prod_{h=0}^t a(j_{h})}{C_{t+1,\sum_{h=0}^{t} j_{h}}^{(a)}} \, P \left\{ N_{t} = \sum_{h=0}^{t} j_{h} \right\} \,,
\end{align*}
where the last equality is due to \eqref{Def:M-UOSP}. Thus \eqref{CondDefMixedaProc} is readily obtained by setting, for any $k\in\mathbb{N}$

\begin{equation}\label{R-eq}
R_{t}(k) := \frac{P\{N_{t}=k\}}{C_{t+1,k}^{(a)}} \,.
\end{equation}

(ii) $\Rightarrow$ (i). Consider an $a$-mixed geometric process $\{N_t\}_{t=0,1 \dots,M}$. For any $t, k \in \mathbb{N}$ and for any $\left( j_{0}, \dots, j_{t} \right)  \in A_{t+1,k}$, by \eqref{CondDefMixedaProc} we get
\begin{align*}
P \{ J_{0} = j_{0}, \dots, J_{t} = j_{t} \vert N_{t} = k \} &= \frac{P \{ J_{0} = j_{0}, \dots, J_{t} = j_{t} \}}{P \{ N_{t} = k \}}\\
&= \frac{R_{t}(k)\cdot \prod_{h=0}^{t} a(j_{h})}{\sum_{\mathbf{v} \in A_{t+1,k}} R_{t}(k)\cdot \prod_{h=0}^{t} a(v_{h})} =\frac{\prod_{h=0}^t a(j_{h})}{C_{t+1,k}^{(a)}}\,,
\end{align*}
which is exactly \eqref{Def:M-UOSP}. \\

(iii) $\Rightarrow$ (iv). For any $\chi \in \mathbb{N}$, consider $t_1, \dots, t_{\chi}$ positive integers. By taking into account Eq. \eqref{Thm:3}, we can write
\begin{align*}
P \{T_1 =t_1, \dots, T_{\chi} = t_{\chi}, T_{\chi+1}>t_{\chi} \} &= P \{Z_1 = t_1, Z_2 = t_2-t_1, \dots, Z_{\chi} = t_{\chi} - t_{\chi-1}, Z_{\chi+1}>0 \} \\
&= R_{\sum_{d=1}^{\chi} (t_d - t_{d-1})} (\chi) \! \prod_{h=0}^{\sum_{d=1}^{\chi} (t_d - t_{d-1})} \! \! a \left( \sum_{i=1}^{\chi} \mathbf{1}_{\left\{ \sum_{d=1}^i (t_d - t_{d-1}) = h \right\}}\right) \\
&= R_{t_{\chi}} (\chi) \cdot \prod_{h=0}^{t_{\chi}} a \left( \sum_{i=1}^{\chi} \mathbf{1}_{\{ t_i = h \}}\right) \,.
\end{align*} 
\\
(iv) $\Rightarrow$ (iii). For any $k \in \mathbb{N}$, consider $z_1, \dots, z_k$ positive integers. We have
\[
P \{ Z_{1} = z_{1}, \dots, Z_{k} = z_{k}, Z_{k+1}>0 \} = P \left\{ T_{1} = z_{1}, T_{2} = z_{1} + z_{2}, \dots, T_{k} = \sum_{d=1}^{k} z_{d}, T_{k+1}>T_k \right\}
\]
and the conclusion immediately follows by \eqref{Thm:4}. \\

(ii) $\Rightarrow$ (iv). Notice that, by definition of the variables $J_{0},J_{1},\dots$ and since
$T_{1}\leq T_{2}\leq\dots$, an event of the form $\{T_{1}=t_{1},\dots,T_{\chi
}=t_{\chi},T_{\chi+1}>t_{\chi}\}$ is equivalent to
\[
\left\{ J_{0}=\sum_{i=1}^{\chi}\mathbf{1}_{\{t_{i}=0\}},J_{1}=\sum_{i=1}^{\chi
}\mathbf{1}_{\{t_{i}=1\}},\dots,J_{t_{\chi}}=\sum_{i=1}^{\chi}\mathbf{1}%
_{\{t_{i}=t_{\chi}\}} \right\},
\]
for any $\chi\in\mathbb{N}$ and any $0\leq t_{1}\leq\dots\leq t_{\chi}$. We can
then write 
\begin{multline*}
P\{T_{1}=t_{1},\dots,T_{\chi}=t_{\chi},T_{\chi+1}>t_{\chi}\}=\\
=P \left\{ J_{0}=\sum_{i=1}^{\chi}\mathbf{1}_{\{t_{i}=0\}},J_{1}=\sum_{i=1}^{\chi
}\mathbf{1}_{\{t_{i}=1\}},\dots,J_{t_{\chi}}=\sum_{i=1}^{\chi}\mathbf{1}%
_{\{t_{i}=t_{\chi}\}} \right\}.
\end{multline*}

Thus, by assuming that $\{N_{t}\}_{t=0,1,\dots,M}$ is an $a$-mixed
geometric process, Eq. \eqref{CondDefMixedaProc} yields
\[
P \{ T_{1}=t_{1},\dots,T_{\chi}=t_{\chi},T_{\chi+1}>t_{\chi}\}=R_{t_{\chi}}(\chi) \cdot
\prod_{h=0}^{t_{\chi}}
a \left( \sum_{i=1}^{\chi} \mathbf{1}_{\{t_{i}=h\}} \right).
\]

(iv) $\Rightarrow$ (ii). Similarly to what noticed in the previous step, we have by definition that the
event $\{J_{0}=j_{0},J_{1}=j_{1},\dots,J_{m}=j_{m}\}$ is equivalent to one of
the form
\[
\{T_{1}=t_{1},\dots,T_{r}=t_{r},T_{r+1}=\dots=T_{\chi}=m,T_{\chi+1}>m\}
\]
for $0 \leq r < \chi$ such that $j_m = \chi-r+1$ and $0 \leq t_1 \leq t_2 \leq \cdots \leq t_r < m$ such that
\[
\left\{ j_{0}=\sum_{i=1}^{\chi}\mathbf{1}_{\{t_{i}=0\}},j_{1}=\sum_{i=1}^{\chi
}\mathbf{1}_{\{t_{i}=1\}},\dots,j_{m}=\sum_{i=1}^{\chi}\mathbf{1}_{\{t_{i}%
=m\}} \right\}.
\]

Whence
\begin{equation}\label{NuovaEquazFabio}
P\{J_{0}=j_{0},J_{1}=j_{1},\dots,J_{m}=j_{m}\} = P\{T_{1}=t_{1},\dots,T_{r}=t_{r},T_{r+1}= \cdots =m,T_{\chi+1}>m\}.
\end{equation}

In the case when the condition (iv) holds, the computation of the probability
appearing in the r.h.s. only requires the knowledge of $\chi$, $t_{\chi}=m$,
and
\[
\sum_{i=1}^{\chi}\mathbf{1}_{\{t_{i}=0\}}=j_{0},\dots,\sum_{i=1}^{\chi
}\mathbf{1}_{\{t_{i}=m\}}=j_{m}\text{,}%
\]
as the formula \eqref{Thm:4} shows. Thus, by combining the formula \eqref{Thm:4} with
(\ref{NuovaEquazFabio}), we obtain \eqref{CondDefMixedaProc}.
\end{proof}

\begin{remark}
Notice that if one of the equivalent conditions (i)--(iv) of Theorem \ref{THM} holds, then in particular we get
\[
P \{ N_t=0 \} = P \{ T_1 > t \} = P \{ J_0 = 0, \dots, J_t = 0 \} = R_t(0) \,.
\]
\end{remark}

Here, we conclude this section with a few comments about the processes with the
$\mathcal{M}^{\left(  a\right)  }$-UOSP. As it is clear from the definition,
the role of such a property is played around conditioning w.r.t. events of the
form $\{N_{t}=k\}$.

Let us  consider, in the beginning, a general sequence of exchangeable jump
amounts $J_{1},J_{2},\dots$ and put $N_{s}=\sum_{i=1}^{s}J_{i}$,
$s=1,2, \dots$. Then, conditionally on $\{N_{t}=k\}$, the joint distribution of
$J_{1},J_{2}, \dots, J_{t}$ is an EOM over $A_{t,k}$. Let
$Y_{1}^{\left(  t\right)  }, \dots, Y_{k}^{\left(  t\right)  }$ be the
$\{1,2, \dots,t\}$- valued, exchangeable, random variables that correspond to
such an occupancy model. We denote by $\mathcal{L}^{\left(  t,k\right)  }$ their joint probability law and recall that their common marginal distribution is, in any case, uniform over $\{1,2, \dots,t\}$.  

In the special case of processes $\{N_{s}\}_{s=1,2,\dots}$ with the
$\mathcal{M}^{\left(  a\right)  }$-UOSP, $\mathcal{L}^{\left(  t,k\right)  }$
is given by
\[
P \left\{ Y^{(t)}_{1} = y_{1}, Y^{(t)}_{2} = y_{2}, \dots, Y^{(t)}_{k} = y_{k} \right\} = \frac{\prod_{h=0}^t a(\tilde{\varphi}_{h} (\mathbf{y})) \tilde{\varphi}_{h} (\mathbf{y})!}{k! \, C_{t+1,k}^{(a)}} \,.
\]
When more in particular $a(x)=1/x!$, i.e. when the conditional distribution of
$J_{1},J_{2}, \dots, J_{t}$ is the MB model over $A_{t,k}$, $Y_{1}^{\left(
t\right)  }, \dots, Y_{k}^{\left(  t\right)  }$ are i.i.d. with uniform
distribution over $\{1,2, \dots, t\}$. This case, the one denoted by UOSP($\leq_1$) in \citet{ShSpSu04}, can be seen in a sense as the discrete-time analog of
the homogeneous Poisson process (in continuous time). The latter satisfies in
fact the classical \emph{Order Statistics Property}: conditionally on $\{N_{t}=k\}$, the
arrival times $T_{1},\dots,T_{k}$ can be seen as the order statistics of $k$
independent random variables,  with an uniform distribution over the interval
$\left[  0,t\right]$. The set of the arrivals can thus be seen as a random
sample from a uniform distribution over $\left[  0,t\right] $.
In the case of UOSP($\leq_1$) then we have that, for any $k>1$ and $t>1$,
$\mathcal{L}^{\left(  t,k-1\right)  }$ obviously coincides with the
$(k-1)$-dimensional marginal of $\mathcal{L}^{\left(  t,k\right)  }$. This last
circumstance actually is a weaker condition than UOSP($\leq_1$). We shall see in the next section, in fact, that it holds for any $\mathcal{M}^{\left(  a\right)  }$-UOSP process with $a$ satisfying an appropriate condition, namely Eq. \eqref{condEOM} below. It can however be still considered as a condition of randomness for the arrivals. An interpretation in this direction can be in fact derived from some arguments in the next section, where we will consider the operation of dropping one arrival time (see the transformation $\mathcal{K}_{1}$ and Proposition \ref{Prop:elimination_y}).

\section{Closure under some transformations of occupancy models}\label{Sect:Transformations}

We devote this final Section to investigate further and collateral properties of occupancy models. In particular, we now consider some special transformations, that map one occupancy model into another, preserving structure and main features.  The transformations we introduce can be described as follows.\\
\begin{description}
\item[Transformation $\mathcal{K}_{1}$] --- Consider $r$ particles distributed among $n$ cells according to a given occupancy model and let $X_{1}, X_2, \dots, X_{n}$ be the related occupancy numbers. We drop one of the particles from this population  randomly  (i.e. so that each of the $r$ particles has the same probability to be dropped, independently of its cell). We denote by $X_{1}^{\prime}, X_2^{\prime}, \dots, X_{n}^{\prime}$ the occupancy numbers associated with the new population of $r-1$ particles.\\
This transformation will be denoted by $\mathcal{K}_{1}$. Notice that $\mathcal{K}_{1}: \mathcal{P} \left( A_{n,r} \right)  \longrightarrow \mathcal{P} \left( A_{n,r-1} \right)$.\\

\item[Transformation $\mathcal{K}_{2}$] --- We consider the transformation $\mathcal{K}_{2}: \mathcal{P} \left( A_{n,r} \right) \longrightarrow \mathcal{P} \left( A_{n-1,r} \right)$ simply obtained by ``erasing'' one of the $n$ cells. More precisely, we start once again from an occupancy model in $\mathcal{P} \left( A_{n,r} \right)$ and let $X_{1}, \dots, X_{n}$ denote occupancy numbers jointly distributed according to such a model. We now consider the case where the $n$-th cell is eliminated and any of the $X_{n}$ particles that had fallen within it, is put at random, and independently, within the remaining cells.\\
By denoting $X_{1}^{\prime\prime}, \dots, X_{n-1}^{\prime\prime}$ the occupancy numbers obtained by this procedure, and for $\left( x_{1}^{\prime\prime}, \dots, x_{n-1}^{\prime\prime} \right) \in A_{n-1,r}$, we can write%
\begin{multline}\label{ProbTransfK2}
P \{ X_{1}^{\prime\prime} = x_{1}^{\prime\prime}, \dots, X_{n-1}^{\prime\prime} = x_{n-1}^{\prime\prime} \} =\\ 
= \sum_{x=0}^{r} \: \sum_{\boldsymbol{\xi} \in A_{n-1,x}} \frac{\binom{x}{\xi_{1} \cdots \xi_{n-1}}}{\left( n-1 \right)^{x}} P \{ X_{1} = x_{1}^{\prime\prime} - \xi_{1}, \dots, X_{n-1} = x_{n-1}^{\prime\prime} - \xi_{n-1}, X_{n} = x \}, 
\end{multline}
where we obviously mean $P \{ X_{1} = x_{1}, \dots, X_{n} = x_{n} \} = 0$, whenever some of the coordinates $x_{1}, \dots, x_{n-1}$ turns out to be smaller than zero.\\

\item[Transformation $\mathcal{K}_{n,s}^{(N,r)}$] --- Here and in the rest of the section we will use the notation $S_{N} = \sum_{i=1}^{N} X_{i}$. We consider the transformation that maps $\mathcal{P} \left( A_{N,r} \right)$ onto $\mathcal{P} \left( A_{n,s} \right)$ (with $1 \leq n \leq N-1, 1 \leq s \leq r$) that is simply obtained by conditioning on a fixed value $s$ for the partial sum $S_{n}$. For variables $X_{1}, \dots, X_{N}$ distributed according to a given occupancy model in $\mathcal{P} \left( A_{N,r} \right)$ we consider the model in $A_{n,s}$ defined by
\[
P \{ X_{1} = x_{1}, \dots, X_{n} = x_{n} | S_{n} = s \} = \frac{P \{ X_{1} = x_{1}, \dots, X_{n} = x_{n} \}}{P \{ S_{n} = s \}}
\]
for $\left( x_{1}, \dots, x_{n} \right) \in A_{n,s}$ and with
\begin{multline*}
P \{ X_{1} = x_{1}, \dots, X_{n} = x_{n} \} =\\
= \sum_{\boldsymbol{\eta} \in A_{N-n,r-s}} P \{ X_{1} = x_{1}, \dots, X_{n} = x_{n}, X_{n+1} = \eta_{1}, \dots, X_{N} = \eta_{N-n} \},
\end{multline*}
\[
P \{ S_{n} = s\} = \sum_{\mathbf{x} \in A_{n,s}} \sum_{\boldsymbol{\eta} \in A_{N-n,r-s}} P \{ X_{1} = x_{1}, \dots, X_{n} = x_{n}, X_{n+1} = \eta_{1}, \dots, X_{n} = \eta_{N-n} \}.
\]
This transformation will be denoted by $\mathcal{K}_{n,s}^{(N,r)}$.\\
\end{description}

We start presenting an interpretation of the marginal distributions of the variables $Y_{1}, Y_{2}, \dots,Y_{r}$, introduced in Section \ref{Sect:EOMandUOSP}, in terms of the associated occupancy numbers $X_{1}, X_{2}, \dots, X_{n}$.

\begin{lemma}\label{Lmm:elimination}
Let $(X_1, \dots, X_n)$ be an occupancy model on $A_{n,r}$ and $(X_1^\prime, \dots, X_n^\prime)$ be the occupancy numbers obtained by applying $\mathcal{K}_1$. Consider the event $E_{\mathbf{x}^{\prime}} = \left\{ X_{1}^{\prime} = x_{1}^{\prime}, \dots, X_{n}^{\prime} = x_{n}^{\prime} \right\}$. Then,
\begin{equation}\label{X'}
P \left\{ E_{\mathbf{x}^{\prime}} \right\} = \sum_{h=1}^{n} \frac{x_{h}^{\prime}+1}{r} P \{ X_{1} = x_{1}^{\prime}, \dots, X_{h} = x_{h}^{\prime}+1, \dots, X_{n} = x_{n}^{\prime} \}.
\end{equation}
\end{lemma}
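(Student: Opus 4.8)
The plan is to apply the law of total probability, decomposing the event $E_{\mathbf{x}^{\prime}}$ over the possible configurations of the original $r$-particle population that are capable of producing $\mathbf{x}^{\prime}$ after one particle is dropped by $\mathcal{K}_{1}$.

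First I would identify which configurations $\mathbf{x} \in A_{n,r}$ can yield $E_{\mathbf{x}^{\prime}}$. Since removing a single particle from cell $h$ lowers $X_{h}$ by one and leaves every other occupancy number unchanged, the only pre-images of $\{X_{1}^{\prime} = x_{1}^{\prime}, \dots, X_{n}^{\prime} = x_{n}^{\prime}\}$ are the $n$ vectors $\mathbf{x}^{(h)} := (x_{1}^{\prime}, \dots, x_{h}^{\prime}+1, \dots, x_{n}^{\prime})$, for $h = 1, \dots, n$, obtained by restoring the dropped particle to one of the cells. Each such vector lies in $A_{n,r}$, because its coordinates sum to $\sum_{j} x_{j}^{\prime} + 1 = (r-1)+1 = r$ and $x_{h}^{\prime}+1 \leq r$. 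Moreover, the events $\{\mathbf{X} = \mathbf{x}^{(h)}\}$ are pairwise disjoint, since $\mathbf{x}^{(h)}$ and $\mathbf{x}^{(h^{\prime})}$ differ in coordinates $h$ and $h^{\prime}$ whenever $h \neq h^{\prime}$; this guarantees that no configuration is double-counted.

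Next I would compute the conditional probability $P\{E_{\mathbf{x}^{\prime}} \mid \mathbf{X} = \mathbf{x}^{(h)}\}$. Given that the population is in configuration $\mathbf{x}^{(h)}$, cell $h$ contains exactly $x_{h}^{\prime}+1$ of the $r$ particles, and by the defining rule of $\mathcal{K}_{1}$ each of the $r$ particles is equally likely to be removed, independently of its cell. Hence the probability that the dropped particle belongs to cell $h$ — which is precisely the event that the resulting configuration equals $\mathbf{x}^{\prime}$ — equals $(x_{h}^{\prime}+1)/r$.

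Finally, I would assemble these pieces through total probability,
\[
P \{ E_{\mathbf{x}^{\prime}} \} = \sum_{h=1}^{n} P\{E_{\mathbf{x}^{\prime}} \mid \mathbf{X} = \mathbf{x}^{(h)}\} \, P\{\mathbf{X} = \mathbf{x}^{(h)}\} = \sum_{h=1}^{n} \frac{x_{h}^{\prime}+1}{r} \, P \{ X_{1} = x_{1}^{\prime}, \dots, X_{h} = x_{h}^{\prime}+1, \dots, X_{n} = x_{n}^{\prime} \},
\]
which is exactly \eqref{X'}. I do not expect a genuine obstacle here: the argument is a clean conditioning computation, and the only points demanding care are verifying that the $\mathbf{x}^{(h)}$ are the sole contributing configurations and that they are disjoint, so that no boundary term with a negative coordinate — already declared to have probability zero in the statement — is erroneously included.
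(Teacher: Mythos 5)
Your proposal is correct and follows essentially the same route as the paper: both decompose $E_{\mathbf{x}^{\prime}}$ over the $n$ pre-image configurations $\mathbf{x}^{(h)}$ (equivalently, over the cell from which the particle was dropped), and both evaluate the conditional probability $P\{E_{\mathbf{x}^{\prime}} \mid \mathbf{X} = \mathbf{x}^{(h)}\} = (x_{h}^{\prime}+1)/r$ from the defining rule of $\mathcal{K}_{1}$. Your added remarks on disjointness of the pre-images and on excluding boundary terms only make explicit what the paper leaves implicit.
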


\begin{proof}
The proof consists of simple manipulations. In fact, we can write
\begin{align*}
P \left\{ E_{\mathbf{x}^{\prime}} \right\} & = P \{ X_{1}^{\prime}=x_{1}^{\prime}, \dots, X_{n}^{\prime}=x_{n}^{\prime} \} \\
& = \sum_{h=1}^{n} P \{ E_{\mathbf{x}^{\prime}} \cap \left( \mbox{the eliminated particle is from the cell } h \right) \}\\
& = \sum_{h=1}^{n} P \{E_{\mathbf{x}^{\prime}} \cap \left( X_{1} = x_{1}^{\prime}, \dots, X_{h} = x_{h}^{\prime}+1, \dots, X_{n} = x_{n}^{\prime} \right) \}\\
&  = \sum_{h=1}^{n} P \{E_{\mathbf{x}^{\prime}} \vert X_{1} = x_{1}^{\prime}, \dots, X_{h} = x_{h}^{\prime}+1, \dots, X_{n}= x_{n}^{\prime} \} \times \\
& \qquad \qquad \quad \times P \{ X_{1} = x_{1}^{\prime}, \dots, X_{h} = x_{h}^{\prime}+1, \dots, X_{n} = x_{n}^{\prime} \} 
\end{align*}
whence \eqref{X'} is readily obtained.
\end{proof}

Let us now consider the exchangeable vectors $\left( Y_{1}, \dots, Y_{r} \right)$ and $\left( Y_{1}^{\prime}, \dots, Y_{r-1}^{\prime} \right)$, corresponding to the occupancy numbers $X_{1}, \dots, X_{n}$ and $X_{1}^{\prime}, \dots, X_{n}^{\prime}$,  respectively.

\begin{proposition}\label{Prop:elimination_y}
The joint distribution of $\left( Y_{1}^{\prime}, \dots, Y_{r-1}^{\prime} \right)$ coincides with the $(r-1)$-di\-men\-sion\-al marginal distribution of $\left( Y_{1}, \dots, Y_{r} \right)$.
\end{proposition}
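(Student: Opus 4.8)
The goal is to show that for every reduced tuple $(y_1,\dots,y_{r-1})$ one has
\[
P\{Y_1'=y_1,\dots,Y_{r-1}'=y_{r-1}\}=\sum_{y_r=1}^{n}P\{Y_1=y_1,\dots,Y_{r-1}=y_{r-1},Y_r=y_r\}.
\]
The plan is to rewrite \emph{both} sides purely in terms of the occupancy numbers $X_j$ via the dictionary \eqref{XvsY}, and then to close the gap between them using Lemma \ref{Lmm:elimination}. Since both $(Y_1,\dots,Y_r)$ and $(Y_1',\dots,Y_{r-1}')$ are exchangeable, it suffices to match the law of $(Y_1',\dots,Y_{r-1}')$ with the marginal obtained by summing out the last coordinate $Y_r$.

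Write $x_j':=\tilde{\varphi}_j(y_1,\dots,y_{r-1})$, so that $\sum_{j=1}^n x_j'=r-1$. First I would apply \eqref{XvsY} to the reduced vector, obtaining
\[
P\{Y_1'=y_1,\dots,Y_{r-1}'=y_{r-1}\}=\binom{r-1}{x_1'\cdots x_n'}^{-1}P\{X_1'=x_1',\dots,X_n'=x_n'\},
\]
and then substitute for $P\{X_1'=x_1',\dots,X_n'=x_n'\}$ the expression furnished by Lemma \ref{Lmm:elimination}, which turns the right-hand side into $\binom{r-1}{x_1'\cdots x_n'}^{-1}$ times a sum over $h=1,\dots,n$ of the terms $\frac{x_h'+1}{r}\,P\{X_1=x_1',\dots,X_h=x_h'+1,\dots,X_n=x_n'\}$.

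For the marginal side, note that the occupancy vector attached to the $r$-tuple $(y_1,\dots,y_{r-1},h)$ is precisely $(x_1',\dots,x_h'+1,\dots,x_n')$, so \eqref{XvsY} gives
\[
P\{Y_1=y_1,\dots,Y_{r-1}=y_{r-1},Y_r=h\}=\binom{r}{x_1'\cdots(x_h'+1)\cdots x_n'}^{-1}P\{X_1=x_1',\dots,X_h=x_h'+1,\dots,X_n=x_n'\}.
\]
The decisive elementary identity, following from $(x_h'+1)!=(x_h'+1)\,x_h'!$ and $r!=r\,(r-1)!$, is
\[
\binom{r}{x_1'\cdots(x_h'+1)\cdots x_n'}=\frac{r}{x_h'+1}\binom{r-1}{x_1'\cdots x_n'},
\]
so the inverse coefficient produces exactly the weight $\frac{x_h'+1}{r}\binom{r-1}{x_1'\cdots x_n'}^{-1}$. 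Summing over $h$ and comparing with the Lemma-based expression above shows the two sides coincide, which proves the claim.

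The only genuine work is the coefficient bookkeeping in this last identity; everything else is substitution, and the weight $\frac{x_h'+1}{r}$ produced by $\mathcal{K}_1$ in Lemma \ref{Lmm:elimination} is forced to be the reciprocal ratio of the two multinomial coefficients, so the matching is automatic. I would keep this computational argument as the main proof, since it rests directly on the already-established Lemma, while noting the conceptual reading that motivates it: through the labelling $X_j=\sum_h \mathbf{1}_{\{Y_h=j\}}$, dropping a uniformly random particle is the same as deleting a uniformly chosen $Y_h$, which by exchangeability has the law of deleting $Y_r$, whose surviving coordinates carry precisely the $(r-1)$-dimensional marginal of $(Y_1,\dots,Y_r)$.
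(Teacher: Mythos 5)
Your proof is correct and follows essentially the same route as the paper: apply \eqref{XvsY} to the reduced vector, invoke Lemma \ref{Lmm:elimination} for the law of $(X_1',\dots,X_n')$, and cancel the multinomial coefficients via $\binom{r}{x_1'\cdots(x_h'+1)\cdots x_n'}=\frac{r}{x_h'+1}\binom{r-1}{x_1'\cdots x_n'}$ before summing over $h$. The only cosmetic difference is that you compute the marginal side directly from the tuple $(y_1,\dots,y_{r-1},h)$, whereas the paper passes through \eqref{YvsX} and $\psi\bigl(\boldsymbol{\tilde{\varphi}}^{(h)}(\mathbf{y}')\bigr)$ and then invokes exchangeability to identify the resulting tuple; the substance is identical.
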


\begin{proof}
In view of \eqref{XvsY} and \eqref{X'}, the joint distribution of $\left( Y_{1}^{\prime}, \dots, Y_{r-1}^{\prime} \right)$ is given by
\begin{align*}
P \{ Y_{1}^{\prime} = y_{1}^{\prime}, & Y_{2}^{\prime} = y_{2}^{\prime}, \dots, Y_{r-1}^{\prime} = y_{r-1}^{\prime} \} =\\
&\\
&= \frac{P \{ X_{1}^{\prime} = \tilde{\varphi}_{1} (\mathbf{y}^{\prime}), X_{2}^{\prime} = \tilde{\varphi}_{2} (\mathbf{y}^{\prime}), \dots, X_{n}^{\prime} = \tilde{\varphi}_{n} (\mathbf{y}^{\prime}) \}}{\binom{r-1}{\tilde{\varphi}_{1} (\mathbf{y}^{\prime}) \tilde{\varphi}_{2} (\mathbf{y}^{\prime}) \cdots \tilde{\varphi}_{n} (\mathbf{y}^{\prime})}} \\
&\\
&=\frac{\tilde{\varphi}_{1} (\mathbf{y}^{\prime})! \tilde{\varphi}_{2} (\mathbf{y}^{\prime})! \cdots \tilde{\varphi}_{n}(\mathbf{y}^{\prime})!}{(r-1)!} \sum_{h=1}^{n} \frac{\tilde{\varphi}_{h} (\mathbf{y}^{\prime})+1}{r} \times \\
& \qquad \times P \{ X_{1} = \tilde{\varphi}_{1}  (\mathbf{y}^{\prime}), \dots, X_{h} = \tilde{\varphi}_{h} (\mathbf{y}^{\prime})+1, \dots, X_{n} = \tilde{\varphi}_{n}(\mathbf{y}^{\prime}) \}.
\end{align*}
For brevity's sake, let us set
\[
\boldsymbol{\tilde{\varphi}}^{(h)}(\mathbf{y}^{\prime}) := \left( \tilde{\varphi}_{1} (\mathbf{y}^{\prime}), \dots, \tilde{\varphi}_{h}(\mathbf{y}^{\prime})+1, \dots,\tilde{\varphi}_{n} (\mathbf{y}^{\prime}) \right) \,,
\]
so that we rewrite the previous equation as
\begin{multline*}
P \{ Y_{1}^{\prime} = y_{1}^{\prime}, Y_{2}^{\prime} = y_{2}^{\prime}, \dots, Y_{r-1}^{\prime} = y_{r-1}^{\prime} \} = \\ =\frac{\tilde{\varphi}_{1} (\mathbf{y}^{\prime})! \tilde{\varphi}_{2} (\mathbf{y}^{\prime})! \cdots \tilde{\varphi}_{n}(\mathbf{y}^{\prime})!}{(r-1)!} \sum_{h=1}^{n} \frac{\tilde{\varphi}_{h} (\mathbf{y}^{\prime})+1}{r}  P \{ \mathbf{X} = \boldsymbol{\tilde{\varphi}}^{(h)}(\mathbf{y}^{\prime}) \}.
\end{multline*}
We can now use \eqref{YvsX} and get
\begin{align*}
P \{ & Y_{1}^{\prime} = y_{1}^{\prime}, Y_{2}^{\prime} = y_{2}^{\prime}, \dots, Y_{r-1}^{\prime} = y_{r-1}^{\prime} \} = \\
&\\
& =\frac{\tilde{\varphi}_{1} (\mathbf{y}^{\prime})! \tilde{\varphi}_{2}(\mathbf{y}^{\prime})! \cdots \tilde{\varphi}_{n}(\mathbf{y}^{\prime})!}{( r-1)!} \sum_{h=1}^{n} \frac{\tilde{\varphi}_{h} (\mathbf{y}^{\prime})+1}{r} \, \frac{r!}{\tilde{\varphi}_{1} (\mathbf{y}^{\prime})! \cdots \left(\tilde{\varphi}_{h}(\mathbf{y}^{\prime})+1 \right)! \cdots \tilde{\varphi}_{n}(\mathbf{y}^{\prime})!} \times \\
& \qquad \times P \left\{ Y_{1} = \psi_{1} \left( \boldsymbol{\tilde{\varphi}}^{(h)}(\mathbf{y}^{\prime}) \right), \dots, Y_{r-1} = \psi_{r-1} \left( \boldsymbol{\tilde{\varphi}}^{(h)}(\mathbf{y}^{\prime}) \right), Y_{r} = \psi_{r} \left( \boldsymbol{\tilde{\varphi}}^{(h)}(\mathbf{y}^{\prime}) \right) \right\} \\
&\\
& = \sum_{h=1}^{n} P \left\{ Y_{1} = \psi_{1} \left( \boldsymbol{\tilde{\varphi}}^{(h)}(\mathbf{y}^{\prime}) \right), \dots, Y_{r-1} = \psi_{r-1} \left( \boldsymbol{\tilde{\varphi}}^{(h)}(\mathbf{y}^{\prime}) \right), Y_{r} = \psi_{r} \left( \boldsymbol{\tilde{\varphi}}^{(h)}(\mathbf{y}^{\prime}) \right) \right\} \,.
\end{align*}
Let us focus on the vector
\[
\left( \psi_{1} \left( \boldsymbol{\tilde{\varphi}}^{(h)}(\mathbf{y}^{\prime}) \right), \dots, \psi_{r-1} \left( \boldsymbol{\tilde{\varphi}}^{(h)}(\mathbf{y}^{\prime}) \right), \psi_{r} \left( \boldsymbol{\tilde{\varphi}}^{(h)}(\mathbf{y}^{\prime}) \right) \right) \,.
\]
Notice that $(r-1)$ components of this vector are respectively equal to $y_1', \dots, y_{r-1}'$ and the remaining component must necessarily be equal to $h$.\\
Thus, by recalling the exchangeability property of $(Y_1, \dots, Y_r)$, we get the equality
\[
P \{ Y_{1}^{\prime} = y_{1}^{\prime}, Y_{2}^{\prime} = y_{2}^{\prime}, \dots, Y_{r-1}^{\prime} = y_{r-1}^{\prime} \} = \sum_{h=1}^{n} P \{ Y_{1} = y_{1}^{\prime}, \dots, Y_{r-1} = y_{r-1}^{\prime}, Y_{r}=h \},
\]
whence the conclusion follows.
\end{proof}

As a remarkable property of the class of the EOM's one can prove its closure under all the transformations $\mathcal{K}_{1}$, $\mathcal{K}_{2}$ and $\mathcal{K}_{n,s}^{(N,r)}$.

\begin{proposition}\label{Trasf1}
Let $(X_1,\dots,X_N)$ be an EOM on $A_{N,r}$ and fix $n < N$. Conditionally on the event $\lbrace S_n=s\rbrace$, the variables $X_1,\dots, X_n$ are distributed according to an EOM on $A_{n,s}$.
\end{proposition}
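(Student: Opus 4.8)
The plan is to verify directly, from the formula defining the transformation $\mathcal{K}_{n,s}^{(N,r)}$, that the conditional law of $(X_1,\dots,X_n)$ given $\{S_n=s\}$ is invariant under permutations of its arguments. Since, by the definitions in Section~\ref{Sect:EOMandUOSP} and Section~\ref{Sect:EOM}, an EOM on $A_{n,s}$ is nothing but an exchangeable probability distribution supported on $A_{n,s}$, establishing this symmetry is all that is required.

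First I would record that the marginal law of $(X_1,\dots,X_n)$ is itself exchangeable. Indeed, if $\sigma$ is any permutation of $\{1,\dots,n\}$, extend it to the permutation $\tilde{\sigma}$ of $\{1,\dots,N\}$ acting as $\sigma$ on the first $n$ indices and as the identity on the remaining ones. Exchangeability of $(X_1,\dots,X_N)$ gives $(X_1,\dots,X_N)\stackrel{d}{=}(X_{\tilde{\sigma}(1)},\dots,X_{\tilde{\sigma}(N)})$, and summing out the coordinates $X_{n+1},\dots,X_N$ yields
\[
(X_1,\dots,X_n)\stackrel{d}{=}(X_{\sigma(1)},\dots,X_{\sigma(n)}).
\]
Equivalently, for every $(x_1,\dots,x_n)$ the probability $P\{X_1=x_1,\dots,X_n=x_n\}$ depends only on the multiset $\{x_1,\dots,x_n\}$.

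Next I would observe that the partial sum $S_n=\sum_{i=1}^{n} X_i$ is a symmetric function of $X_1,\dots,X_n$, so the conditioning event $\{S_n=s\}$ is unchanged under any permutation of those coordinates. Writing, for $(x_1,\dots,x_n)\in A_{n,s}$,
\[
P\{X_1=x_1,\dots,X_n=x_n \mid S_n=s\}=\frac{P\{X_1=x_1,\dots,X_n=x_n\}}{P\{S_n=s\}},
\]
the denominator is a constant (independent of the vector, and positive whenever the conditioning is meaningful), while the numerator is permutation-invariant by the previous step. Hence the left-hand side is invariant under permutations of $(x_1,\dots,x_n)$, i.e.\ the conditional law is exchangeable; being plainly supported on $A_{n,s}$, it is an EOM on $A_{n,s}$, as claimed.

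There is essentially no serious obstacle here: the only point deserving care is the passage from exchangeability of the full vector to exchangeability of the initial segment $(X_1,\dots,X_n)$ --- the standard fact that a marginal of an exchangeable vector is exchangeable --- together with the remark that conditioning on the symmetric event $\{S_n=s\}$ preserves this symmetry. One should also note that the statement implicitly assumes $P\{S_n=s\}>0$, so that the conditional distribution is well defined.
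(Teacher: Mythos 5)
Your proof is correct and takes essentially the same route as the paper's: both arguments reduce the claim to the exchangeability of the marginal law of $(X_1,\dots,X_n)$, obtained by summing out the coordinates $X_{n+1},\dots,X_N$ and invoking the exchangeability of the full vector, and then observe that conditioning on the symmetric event $\{S_n=s\}$ merely divides by the constant $P\{S_n=s\}$. Your explicit remarks about extending $\sigma$ to a permutation of $\{1,\dots,N\}$ and about needing $P\{S_n=s\}>0$ are tacit in the paper but add nothing substantively different.
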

\begin{proof}
It must be proved that conditionally on $\lbrace S_n=s \rbrace$, the vector $(X_1,\dots,X_n)$ is exchangeable, i.e.
$$P\left\{ X_{\sigma(1)}=x_1,\dots,X_{\sigma(n)}=x_n|S_n=s \right\}=P \left\{ X_1=x_1,\dots,X_n=x_n|S_n=s\right\}$$
for every permutation $\sigma$ of $\lbrace 1,2,\dots,n\rbrace$. It straightforwardly follows from the exchangeability property of the random variables $X_1,\dots,X_N$. Indeed, for any $(x_1, \dots, x_n) \in A_{n,s}$, we obtain
\begin{allowdisplaybreaks}
\begin{align*}
P \big\{ X_{\sigma(1)} &= x_1, \dots, X_{\sigma(n)} = x_n|S_n=s \big\} = \\
&\\
&=\frac{P\{X_1=x_{\sigma(1)},\dots,X_{\sigma(n)}=x_n\}}{P\{S_n=s\}} \\ 
&\\
&=\sum_{\boldsymbol{\eta} \in A_{N-n,r-s}}\frac{P\{X_1=x_{\sigma(1)},\dots,X_{\sigma(n)}=x_n,X_{n+1}=\eta_1,\dots,X_{N}=\eta_{N-n}\}}{P\{S_n=s\}} \\ 
&\\
&=\sum_{\boldsymbol{\eta} \in A_{N-n,r-s}}\frac{P\{X_1=x_1,\dots,X_n=x_n, X_{n+1}=\eta_1,\dots, X_{N}=\eta_{N-n}\}}{P\{S_n=s\}} \\ 
&\\
&=\frac{P\{X_1=x_1,\dots,X_n=x_n,S_n=s\}}{P\{S_n=s\}}\\
&\\
&=P\{X_1=x_1,\dots,X_n=x_n|S_n=s\}
\end{align*}
\end{allowdisplaybreaks}\\[-0.8cm]
and this concludes the proof.
\end{proof}

The following Proposition can be easily derived from Lemma \ref{Lmm:elimination}. 

\begin{proposition}\label{Trasf2}
Let $(X_1,\dots,X_n)$ be an EOM on $A_{n,r}$ and $(X'_1, \dots, X'_n)$ be the occupancy numbers obtained by applying the transformation $\mathcal{K}_1$. Then, $(X'_1, \dots, X'_n)$ is an EOM on $A_{n,r-1}$.
\end{proposition}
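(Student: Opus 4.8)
The plan is to prove exchangeability of the vector $(X_1', \dots, X_n')$ produced by $\mathcal{K}_1$, since by the very construction of the transformation this vector already takes its values in $A_{n,r-1}$; establishing exchangeability is then exactly what it means to be an EOM on $A_{n,r-1}$. The natural starting point is the explicit formula \eqref{X'} furnished by Lemma \ref{Lmm:elimination}, which writes $P\{E_{\mathbf{x}'}\}$ as a weighted sum, over the index $h$ of the cell from which the dropped particle was taken, of probabilities of ``shifted'' configurations of the original model.

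First I would fix an arbitrary permutation $\sigma$ of $\{1,\dots,n\}$ and set $\mathbf{x}'' = (x_{\sigma(1)}', \dots, x_{\sigma(n)}')$, aiming to show $P\{E_{\mathbf{x}''}\} = P\{E_{\mathbf{x}'}\}$. Applying \eqref{X'} to $\mathbf{x}''$ expresses $P\{E_{\mathbf{x}''}\}$ as a sum over $h$ of terms of the form $\tfrac{x_{\sigma(h)}'+1}{r}\,P\{\mathbf{X}=\mathbf{v}^{(h)}\}$, where $\mathbf{v}^{(h)}$ denotes $\mathbf{x}''$ with its $h$-th coordinate raised by one. The decisive point is the identity $\mathbf{v}^{(h)} = \sigma\!\cdot\! w^{(\sigma(h))}$, where $w^{(m)}$ is the vector obtained from $\mathbf{x}'$ by incrementing its $m$-th coordinate and $\sigma\cdot$ denotes the action of $\sigma$ on coordinates: incrementing coordinate $h$ of the $\sigma$-permuted vector is the same as first incrementing coordinate $\sigma(h)$ of $\mathbf{x}'$ and then permuting. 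Once this is checked, exchangeability of the original occupancy model $(X_1,\dots,X_n)$ gives $P\{\mathbf{X}=\mathbf{v}^{(h)}\} = P\{\mathbf{X}=w^{(\sigma(h))}\}$, and reindexing the sum through $m=\sigma(h)$ turns the expression for $P\{E_{\mathbf{x}''}\}$ into precisely the expression \eqref{X'} for $P\{E_{\mathbf{x}'}\}$. This yields the desired invariance, hence exchangeability of $(X_1',\dots,X_n')$.

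I expect the only genuine obstacle to be the bookkeeping behind the identity $\mathbf{v}^{(h)} = \sigma\!\cdot\! w^{(\sigma(h))}$, that is, verifying that the coordinatewise ``$+1$ shift'' commutes correctly with the permutation so that exchangeability of $\mathbf{X}$ can legitimately be invoked term by term; everything else (the reindexing and the matching of the prefactors $\tfrac{x_{\sigma(h)}'+1}{r}$) is routine. As the author remarks, the statement then follows ``easily'' from Lemma \ref{Lmm:elimination}, so this permutation bookkeeping is really the whole content of the proof.
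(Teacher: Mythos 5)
Your proposal is correct and follows exactly the route the paper indicates, namely deriving the exchangeability of $(X_1',\dots,X_n')$ from formula \eqref{X'} of Lemma \ref{Lmm:elimination}; the paper itself omits the details, and your permutation bookkeeping (the identity $\mathbf{v}^{(h)}=\sigma\cdot w^{(\sigma(h))}$ followed by the reindexing $m=\sigma(h)$) is precisely the verification it leaves to the reader. No gaps.
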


The following Proposition can be easily obtained by taking into account equation \eqref{ProbTransfK2}.

\begin{proposition}\label{Trasf3}
Let $(X_1,\dots,X_n)$ be an EOM on $A_{n,r}$ and $(X''_1, \dots, X''_n)$ be the occupancy numbers obtained by applying the transformation $\mathcal{K}_2$. Then, $(X''_1, \dots, X''_n)$ is an EOM on $A_{n-1,r}$.
\end{proposition}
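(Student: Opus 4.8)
The plan is to show that the transformation $\mathcal{K}_2$ preserves exchangeability, starting directly from the explicit formula \eqref{ProbTransfK2} for the law of $(X_1'', \dots, X_{n-1}'')$. Concretely, I must verify that for every permutation $\sigma$ of $\{1, 2, \dots, n-1\}$ and every $(x_1'', \dots, x_{n-1}'') \in A_{n-1,r}$ one has
\[
P \{ X_1'' = x_{\sigma(1)}'', \dots, X_{n-1}'' = x_{\sigma(n-1)}'' \} = P \{ X_1'' = x_1'', \dots, X_{n-1}'' = x_{n-1}'' \}.
\]
First I would substitute $(x_{\sigma(1)}'', \dots, x_{\sigma(n-1)}'')$ into the right-hand side of \eqref{ProbTransfK2}, obtaining a double sum over $x \in \{0, \dots, r\}$ and $\boldsymbol{\xi} \in A_{n-1,x}$ of the multinomial weight $\binom{x}{\xi_1 \cdots \xi_{n-1}} / (n-1)^x$ times $P\{X_1 = x_{\sigma(1)}'' - \xi_1, \dots, X_{n-1} = x_{\sigma(n-1)}'' - \xi_{n-1}, X_n = x\}$.

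The key step is a simultaneous change of summation variable: in the inner sum over $\boldsymbol{\xi}$, reindex by setting $\eta_i := \xi_{\sigma^{-1}(i)}$, so that $\boldsymbol{\eta}$ again ranges over $A_{n-1,x}$ (since $\sigma$ merely permutes coordinates and leaves the constraint $\sum_i \xi_i = x$ invariant). Under this reindexing the multinomial coefficient $\binom{x}{\xi_1 \cdots \xi_{n-1}}$ is unchanged, because it is a symmetric function of its lower arguments, and the factor $(n-1)^{-x}$ does not involve $\boldsymbol{\xi}$ at all. The surviving probability becomes $P\{X_{\sigma(1)} = x_1'' - \eta_1, \dots, X_{\sigma(n-1)} = x_{n-1}'' - \eta_{n-1}, X_n = x\}$, and here I would invoke the exchangeability of $(X_1, \dots, X_n)$ — the original model on $A_{n,r}$ — applied to the permutation of $\{1, \dots, n\}$ that acts as $\sigma$ on the first $n-1$ indices and fixes $n$, to rewrite this as $P\{X_1 = x_1'' - \eta_1, \dots, X_{n-1} = x_{n-1}'' - \eta_{n-1}, X_n = x\}$.

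After these two moves the expression is term-by-term identical to \eqref{ProbTransfK2} evaluated at the unpermuted vector $(x_1'', \dots, x_{n-1}'')$, which establishes exchangeability of $(X_1'', \dots, X_{n-1}'')$; that the resulting distribution lives on $A_{n-1,r}$ is immediate since the $\mathcal{K}_2$ construction conserves the total number $r$ of particles. I expect the only real point requiring care to be the bookkeeping of the reindexing $\eta_i = \xi_{\sigma^{-1}(i)}$ and the matching permutation on $\{1, \dots, n\}$ fixing $n$: one must confirm that the convention $P\{X_1 = x_1, \dots, X_n = x_n\} = 0$ for negative coordinates is respected consistently on both sides, but since this convention is itself symmetric in the coordinates it causes no difficulty. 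Everything else is the routine symmetry of the multinomial coefficient, so the argument reduces to exchangeability of the parent model, exactly as the statement that this follows ``easily'' from \eqref{ProbTransfK2} anticipates.
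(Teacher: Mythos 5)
Your argument is correct and is precisely the proof the paper leaves implicit when it says Proposition~\ref{Trasf3} ``can be easily obtained by taking into account equation \eqref{ProbTransfK2}'': permute the target point, reindex $\boldsymbol{\xi}$ by $\sigma$, use the symmetry of the multinomial coefficient, and absorb the permutation into the exchangeability of $(X_1,\dots,X_n)$ via the extension of $\sigma$ fixing the index $n$. The only blemish is a harmless $\sigma$ versus $\sigma^{-1}$ slip in the reindexed probability (with $\eta_i=\xi_{\sigma^{-1}(i)}$ the surviving term is $P\{X_{\sigma^{-1}(1)}=x_1''-\eta_1,\dots\}$), which does not affect the conclusion since exchangeability is invoked for an arbitrary permutation.
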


\begin{remark}
The transformation $\mathcal{K}_{2}$ can be seen as a special case of a more general class of transformations: we can consider the case where the $n$-th cell is eliminated and the $X_{n}$ particles that had fallen within it are distributed within the remaining cells according to an exchangeable occupancy model. The class of the EOM's is closed also w.r.t. this class of transformations. 
\end{remark}

From Proposition \ref{Trasf1} we know that a subvector $(X_1,\dots,X_n)$ of an EOM $(X_1,\dots,X_N)$ is still an EOM conditionally on a fixed value for the sum $S_{n}$ (closure property w.r.t. transformations of the type $\mathcal{K}_{n,s}^{(N,r)}$). Then we know that, by applying this transformation to an occupancy model of the form $\mathcal{M}^{(a)}$, we surely obtain an exchangeable model. It is then natural to wonder whether the latter is still of the form $\mathcal{M}^{(a)}$ (i.e., whether the closure property also holds for the class $\mathcal{M}^{(a)}$). The following proposition answers this question.  

\begin{proposition}
Let $(X_1,\dots,X_N)$ be a $\mathcal{M}_{N,r}^{(a)}$-model and define $S_n=X_1+\cdots+X_n$ with $n\leq N$. Conditionally on the event $\lbrace S_n=s\rbrace$, the variables $X_1,\cdots, X_n$ are distributed as a $\mathcal{M}_{n,s}^{(a)}$-model.
\end{proposition}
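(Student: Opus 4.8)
The plan is to compute the conditional law directly from the product formula \eqref{Def:EOMM}, showing that the multiplicative structure is preserved by the conditioning and that the appropriate normalization constant emerges on its own. We already know from Proposition \ref{Trasf1} that the conditioned vector is an EOM, so the only real content is to verify the explicit $\mathcal{M}^{(a)}$ form.

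First I would fix $(x_1,\dots,x_n)\in A_{n,s}$ and observe that, since $\sum_{j=1}^n x_j = s$, the event $\{X_1=x_1,\dots,X_n=x_n\}$ already forces $\{S_n=s\}$; hence
\[
P\{X_1=x_1,\dots,X_n=x_n\mid S_n=s\}=\frac{P\{X_1=x_1,\dots,X_n=x_n\}}{P\{S_n=s\}}.
\]
To evaluate the numerator I would marginalize over the remaining $N-n$ cells, inserting \eqref{Def:EOMM}, which gives
\[
P\{X_1=x_1,\dots,X_n=x_n\}=\sum_{\boldsymbol{\eta}\in A_{N-n,r-s}} \frac{\prod_{j=1}^n a(x_j)\,\prod_{i=1}^{N-n} a(\eta_i)}{C_{N,r}^{(a)}},
\]
where the tail coordinates range over $A_{N-n,r-s}$ because $\sum_{i=n+1}^N X_i = r-s$.

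The key step is to recognize the inner sum as a normalization constant: pulling the $\mathbf{x}$-dependent factor out of the sum and invoking the very definition of $C^{(a)}$,
\[
\sum_{\boldsymbol{\eta}\in A_{N-n,r-s}}\prod_{i=1}^{N-n} a(\eta_i)=C_{N-n,r-s}^{(a)}.
\]
This yields $P\{X_1=x_1,\dots,X_n=x_n\}=\prod_{j=1}^n a(x_j)\,C_{N-n,r-s}^{(a)}/C_{N,r}^{(a)}$. Summing over all $\mathbf{x}\in A_{n,s}$ and applying the definition of $C^{(a)}$ once more gives
\[
P\{S_n=s\}=\frac{C_{n,s}^{(a)}\,C_{N-n,r-s}^{(a)}}{C_{N,r}^{(a)}}.
\]
Forming the ratio, the factors $C_{N-n,r-s}^{(a)}$ and $C_{N,r}^{(a)}$ cancel, leaving $\prod_{j=1}^n a(x_j)/C_{n,s}^{(a)}$, which is precisely the $\mathcal{M}_{n,s}^{(a)}$-model.

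There is no genuine obstacle here; the argument is essentially a one-line consequence of the multiplicative form of \eqref{Def:EOMM}, and is in fact the same phenomenon as the sufficiency observation in Remark \ref{Rmk:sufficiency}. The only point demanding care is the combinatorial bookkeeping — checking that the tail coordinates genuinely index $A_{N-n,r-s}$, so that the two occurrences of $C_{N-n,r-s}^{(a)}$ match and cancel. Exchangeability of the conditioned vector need not be argued separately, since the resulting product form is manifestly symmetric in $x_1,\dots,x_n$.
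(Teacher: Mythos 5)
Your proposal is correct and follows essentially the same route as the paper: condition, marginalize the tail coordinates over $A_{N-n,r-s}$, and pull out the factor $\prod_{j=1}^n a(x_j)$. The only (harmless) difference is at the last step, where you identify the constant explicitly as $C_{N-n,r-s}^{(a)}/\bigl(C_{N,r}^{(a)}\,P\{S_n=s\}\bigr)$ and compute $P\{S_n=s\}=C_{n,s}^{(a)}C_{N-n,r-s}^{(a)}/C_{N,r}^{(a)}$, whereas the paper simply notes that the resulting measure is proportional to $\prod_{j=1}^n a(x_j)$ on $A_{n,s}$ and so the constant must be $\bigl(C_{n,s}^{(a)}\bigr)^{-1}$ by normalization.
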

\begin{proof}
We need to prove that conditionally on $\lbrace S_n=s \rbrace$, the vector $(X_1,\dots,X_n)$ is distributed as a $\mathcal{M}_{n,s}^{(a)}$-model, i.e.
$$P\{X_1=x_1,\dots,X_n=x_n|S_n=s\}=\frac{\prod_{j=1}^{n} a(x_j)}{C_{n,s}^{(a)}}.$$
For any $(x_1,\dots,x_n) \in A_{n,s}$, we have
\begin{align*}
P \{ X_1 = x_1&, \dots, X_n = x_n|S_n = s \} =
\sum_{\boldsymbol{\eta} \in A_{N-n,r-s}}\frac{\prod_{j=1}^n a(x_j)\prod_{i=1}^{N-n}a(\eta_i)}{C^{(a)}_{N,r} \, P\{S_n=s\}} 
=K \prod_{j=1}^{n} a(x_j)
\end{align*}
where $$K := \frac{1}{C^{(a)}_{N,r} \, P\{S_n=s\}} \: \sum_{\boldsymbol{\eta} \in A_{N-n,r-s}} \: \prod_{i=1}^{N-n}a(\eta_i) \,.$$
Since $P\{X_1=x_1,\dots,X_n=x_n|S_n=s\}$ is a probability distribution, $K$ is exactly equal to $\left( C_{n,s}^{(a)} \right)^{-1}$ and this concludes the proof.
\end{proof}

As shown in Proposition \ref{Trasf2}, the action of dropping one particle at random (transformation $\mathcal{K}_1$) preserves the exchangeability property. As we will see in next Proposition, the structure of $\mathcal{M}^{(a)}$-model is preserved only under the technical assumption \eqref{condEOM}. It is easy to see that MB, BE, FD and the pseudo-contagious occupancy models satisfy this condition.

\begin{proposition}
Let $(X_{1}, X_{2}, \dots, X_{n})$ be a $\mathcal{M}_{n,r}^{(a)}$-model and let the function \mbox{$a:\{0,1,\dots\} \longrightarrow ]0,+\infty[$} satisfy the condition
\begin{equation}\label{condEOM}
\frac{C_{n,r-1}^{(a)}}{C_{n,r}^{(a)}}\sum_{h=1}^{n} \frac{x'_{h}+1}{r}\frac{a(x'_{h}+1)}{a(x'_h)}=1 \,, \quad \mbox{ for any } {\bf x'} \in A_{n,r-1} \,.
\end{equation}
Then, the occupancy vector $(X_1',\dots,X'_n)$, obtained by applying the transformation $\mathcal{K}_1$, is distributed according to the model $\mathcal{M}_{n,r-1}^{(a)}$.
\end{proposition}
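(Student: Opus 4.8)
The plan is to feed the $\mathcal{M}^{(a)}$-model structure directly into Lemma~\ref{Lmm:elimination}: equation \eqref{X'} already expresses the post-dropping probabilities as a weighted sum of the original ones, and condition \eqref{condEOM} is exactly what is needed to turn that weighted sum into the correct normalization constant.

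First I would substitute the model \eqref{Def:EOMM} into \eqref{X'}. For each $h$ the incremented vector $(x_1', \dots, x_h'+1, \dots, x_n')$ lies in $A_{n,r}$, so its probability equals $\bigl(\prod_{j \neq h} a(x_j')\bigr)\, a(x_h'+1) / C_{n,r}^{(a)}$. Since $a$ is strictly positive I may write $a(x_h'+1) = a(x_h')\,\frac{a(x_h'+1)}{a(x_h')}$ and pull the common product $\prod_{j=1}^n a(x_j')$ outside the sum over $h$, obtaining
\[
P\{E_{\mathbf{x}'}\} = \frac{\prod_{j=1}^n a(x_j')}{C_{n,r}^{(a)}} \sum_{h=1}^n \frac{x_h'+1}{r}\,\frac{a(x_h'+1)}{a(x_h')} \,.
\]
By assumption \eqref{condEOM} the sum over $h$ equals $C_{n,r}^{(a)}/C_{n,r-1}^{(a)}$, and substituting this in yields
\[
P\{E_{\mathbf{x}'}\} = \frac{\prod_{j=1}^n a(x_j')}{C_{n,r-1}^{(a)}} \,,
\]
which is precisely the law of an $\mathcal{M}_{n,r-1}^{(a)}$-model on $A_{n,r-1}$. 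This would complete the argument.

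I do not expect any real obstacle: all the work is carried by Lemma~\ref{Lmm:elimination}, and \eqref{condEOM} has been tailored so that the $\mathbf{x}'$-dependent weight collapses to a constant. The only conceptual point worth flagging is \emph{why} a constraint of the form \eqref{condEOM} must appear at all: by Proposition~\ref{Trasf2} the dropped vector is automatically an EOM, but without \eqref{condEOM} the weight $\sum_{h} \frac{x_h'+1}{r}\frac{a(x_h'+1)}{a(x_h')}$ genuinely varies with $\mathbf{x}'$, so the resulting law, while still exchangeable, need not be of product form. As a consistency check one may note that, once this weight is known to be constant, summing $P\{E_{\mathbf{x}'}\}$ over $A_{n,r-1}$ forces that constant to be exactly $C_{n,r}^{(a)}/C_{n,r-1}^{(a)}$, in agreement with \eqref{condEOM}.
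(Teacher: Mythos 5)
Your proof is correct and is precisely the argument the paper has in mind: the authors omit the proof but state that it "can be obtained rather easily by recalling the formula \eqref{X'} and by applying \eqref{condEOM}," which is exactly your substitution of the $\mathcal{M}_{n,r}^{(a)}$ density into Lemma~\ref{Lmm:elimination} followed by the collapse of the $\mathbf{x}'$-dependent weight via \eqref{condEOM}. Your closing remarks on why the condition is needed and on the consistency check via normalization are sound additions but not part of the paper's (omitted) proof.
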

For brevity's sake we omit the proof that can be obtained rather easily by recalling the formula \eqref{X'} and by applying \eqref{condEOM}.
\begin{remark}
One can easily realize that the class of the $\mathcal{M}^{(a)}$-models is strictly contained within the class of the EOM's.
Examples can be easily found, for instance, by starting from models in the class $\mathcal{M}^{(a)}$ and by applying the transformations $\mathcal{K}_{1}$ or $\mathcal{K}_{2}$.
\end{remark}

\section*{Acknowledgements}
Fabio Spizzichino has been partially supported in the frame of Research Projects ``Modelli e Algoritmi Stocastici: Convergenza ed Ottimizzazione'' 2009 and ``Modelli Stocastici e Applicazioni alla Fisica e alla Finanza'' 2010 of University La Sapienza, Rome. The research of Fabrizio Leisen has been partially supported by the Spanish Ministry of Science and Innovation through grant ECO2011-25706.
%
%
%
%
%

\bibliographystyle{plainnat}





\end{document}